\DeclareMathAlphabet{\matcal}{OMS}{zplm}{m}{n}
\newcommand{\rouge}[1]{{\color{red}#1}}
\newlist{hyp0}{enumerate}{1}
\setlist[hyp0]{label=(H\arabic*)}
\numberwithin{equation}{section}
\newtheorem{theorem}{Theorem}[section]
\newtheorem*{theorem*}{Theorem}
\newtheorem{lemma}[theorem]{Lemma}
\newtheorem{proposition}[theorem]{Proposition}
\begin{document}

\newcommand{\D}{\mathbb{D}}
\newcommand{\R}{\mathbb{R}}
\newcommand{\Q}{\mathbb{Q}}
\newcommand{\Z}{\mathbb{Z}}
\newcommand{\N}{\mathbb{N}}
\newcommand{\EE}{\mathbb{E}}
\newcommand{\PP}{\mathbb{P}}
\newcommand{\dd}{\textrm{d}}
\newcommand{\Fcal}{\mathcal{F}}
\newcommand{\Hcal}{\matcal{H}}
\newcommand{\HH}{\matcal{H}_H}
\newcommand{\vbar}{\bar{v}_{\xi,H}}
\newcommand{\vH}{v_{\xi,H}}
\newcommand{\e}{\mathbf{e}_n}
\newcommand{\ee}{\mathbf{e}_{2n}}

\newcommand{\checked}{\rouge{[ $\tfrac{1}{4}<H<\tfrac{1}{2}$ checked] }}
\newcommand{\si}{s_{(i)}}
\newcommand{\sii}{s_{(ii)}}
\newcommand{\siii}{s_{(iii)}}
\newcommand{\Cb}{\matcal{C}_b}
\newcommand{\sbullet}{{\scalebox{0.4}{$\bullet$}}}

\title{\normalfont \Large  {Noise sensitivity of functionals of fractional Brownian motion 
driven stochastic differential equations: Results and perspectives}}
\vspace{-1cm}

\author[1]{Alexandre Richard}
\author[2]{Denis Talay}
\affil[1]{\small CMAP, École Polytechnique, Route de Saclay, 91128 Palaiseau, France}
\affil[2]{\small INRIA Sophia-Antipolis,  2004 route des Lucioles, F-06902 Sophia-Antipolis, France}

\date{\small \today}

\maketitle

\begin{abstract}
We present an innovating sensitivity analysis for stochastic 
differential equations:
We study the sensitivity, when the Hurst 
parameter~$H$ of the driving fractional Brownian motion tends to the 
pure Brownian value, of 
probability distributions of smooth functionals of the trajectories of 
the solutions 
$\{X^H_t\}_{t\in \mathbb{R}_+}$ and of the Laplace transform of 
the first passage time of $X^H$ at a given threshold.
Our technique requires to extend already known Gaussian 
estimates on the density of $X^H_t$ to estimates with constants
which are uniform w.r.t. $t$ in in the whole half-line $\R_+-\{0\}$ and 
$H$ when $H$ tends to~$\tfrac{1}{2}$.
\end{abstract}

{\sl Key words\/}: Fractional Brownian motion, Malliavin calculus, first hitting time.

\section{Introduction}
\label{sec:Intro}

Recent statistical studies show memory effects in biological, 
financial, physical data: see e.g.~\cite{Rypdal} for a statistical 
evidence in climatology and \cite{ComteCoutinRenault} for a financial model and citations therein for evidence in finance.
For such data the Markov structure of L\'evy driven stochastic 
differential equations makes such models questionable. It seems worth
proposing new models driven by noises with 
long-range memory such as fractional Brownian motions.

In practice the accurate estimation of the Hurst parameter~$H$ of the 
noise is difficult (see e.g.~\cite{BerzinEtAl}) and therefore one needs 
to 
develop 
sensitivity analysis w.r.t.~$H$ of probability distributions of smooth 
and non smooth functionals of the solutions $(X^H_t)$ to stochastic 
differential equations. Similar ideas were developed in \cite{JolisViles} for symmetric integrals of the fractional Brownian motion.

Here we review and illustrate by numerical experiments our theoretical 
results obtained in~\cite{RichardTalay} for two extreme situations in 
terms of Malliavin regularity: on the one hand, expectations of smooth 
functions of the solution at a fixed time; on the other hand,
Laplace transforms of first passage times at 
prescribed thresholds.
Our motivation to consider first passage times comes from their many 
use in various applications:
default risk in mathematical finance or spike trains in neuroscience
(spike trains are sequences of times at which the membrane potential of neurons reach 
limit thresholds and then are reset to a resting value, 
are essential to describe the neuronal activity),
stochastic numerics
(see e.g. \cite[Sec.3]{bossy-al}) and physics
(see e.g.~\cite{M-R-O-14}). 
Long-range dependence leads to analytical and numerical 
difficulties: see e.g. \cite{Jeon-al}.

Our theoretical estimates and numerical results tend to show that the 
Markov Brownian model is a good proxy 
model as long as the Hurst parameter remains close to~$\frac{1}{2}$.
This robustness property, even for probability distributions of 
singular functionals (in the sense of Malliavin calculus) of the paths 
such as first hitting times, is an important information for modeling 
and simulation purposes:
when statistical or calibration procedures lead to estimated values 
of~$H$ close to $\frac{1}{2}$, then it is reasonable to work with  
Brownian SDEs, which allows to analyze the model by means of PDE 
techniques and
stochastic calculus for semimartingales, and to simulate it by means of 
standard stochastic simulation methods. 

\subsubsection*{Our main results} 
The fractional Brownian motion $\{B^H_t\}_{t\in \R_+}$ with Hurst 
parameter 
$H\in(0,1)$ 
is the centred Gaussian process with covariance
\begin{equation*}
R_H(s,t) = \tfrac{1}{2}\left(s^{2H} + t^{2H} - |t-s|^{2H}\right), \quad 
\forall s,t\in \R_+.
\end{equation*}
Given $H\in(\tfrac{1}{2},1)$, we consider the
process $\{X^H_t\}_{t\in \R_+}$ solution to the following stochastic 
differential equation driven by~$\{B^H_t\}_{t\in \R_+}$:
\begin{equation}\label{eq:fSDE_0}
X^H_t = x_0 + \int_0^t b(X^H_s)\ \dd s + \int_0^t \sigma(X^H_s) \circ 
\dd B^H_s , \tag{1;H}
\end{equation}
where the last integral is a pathwise Stieltjes integral in the sense 
of \cite{Young}. For $H=\tfrac{1}{2}$ the process $X$ 
solves the following SDE in the classical Stratonovich sense:
\begin{equation}\label{eq:SDE_0}
X_t = x_0 + \int_0^t b(X_s)\ \dd s + \int_0^t \sigma(X_s) \circ \dd B_s 
.\tag{1;$\tfrac{1}{2}$}
\end{equation}

Below we use the following set of hypotheses:
\begin{hyp0}
\item\label{hyp:h1} There exists $\gamma\in (0,1)$ such that 
$b,\sigma\in \matcal{C}^{1+\gamma}(\R)$;
\item\label{hyp:h1'} $b,\sigma \in \matcal{C}^2(\R)$;
\item\label{hyp:h2} The function $\sigma$ satisfies a strong 
ellipticity condition: $\exists \sigma_0>0$ such that $|\sigma(x)|\geq 
\sigma_0, 
\forall x\in \R$.
\end{hyp0}

Our first theorem is elementary. It describes the sensitivity w.r.t. 
$H$ around the 
critical Brownian parameter $H=\tfrac{1}{2}$ of time marginal 
probability distributions of $\{X^H_t\}_{t\in \R_+}$.

\begin{theorem}\label{th:gapLawXt^H}
Let $H\in(\tfrac{1}{2},1)$, and let $X^H$ and $X$ be as before. 
Suppose that $b$ and $\sigma$ satisfy \ref{hyp:h1} and \ref{hyp:h2}, 
and $\varphi$ is 
bounded and H\"older continuous of order $2+\beta$ for some $\beta>0$. 
Then, for any $T>0$ there exists $C_T>0$ such that 
\begin{equation*}
\forall H\in[\tfrac{1}{2},1),~~
\sup_{t\in [0,T]} \left|\EE \varphi(X_t^H) - \EE \varphi(X_t) \right| 
\leq C_T\ (H-\tfrac{1}{2}).
\end{equation*}
\end{theorem}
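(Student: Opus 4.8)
The plan is to eliminate the multiplicative noise by a Lamperti (Doss--Sussmann) change of variable, and then to compare the two centred Gaussian drivers through a Gaussian interpolation of their covariances. Since $\sigma\ge\sigma_0>0$ by \ref{hyp:h2} and $\sigma\in\matcal{C}^{1+\gamma}$, the primitive $F(x):=\int_{x_0}^{x}\sigma(y)^{-1}\,\dd y$ is an increasing $\matcal{C}^{2+\gamma}$ diffeomorphism of $\R$. For $H>\tfrac12$ the Young--Stieltjes integral obeys the usual chain rule, so $Z^H_t:=F(X^H_t)$ solves the additive-noise equation $Z^H_t=\int_0^t \tilde g(Z^H_s)\,\dd s+B^H_t$ with $\tilde g:=(b/\sigma)\circ F^{-1}\in\matcal{C}^{1+\gamma}$, and the same identity holds for $H=\tfrac12$. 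Hence, for each realisation of the driving path $w$, the drift $a_t:=Z^H_t-w_t$ is the solution of the ordinary differential equation $\dot a_t=\tilde g(a_t+w_t)$, $a_0=0$, so that $X^H_t=F^{-1}(w_t+a_t(w))=:\Phi_t(w)$ for one and the same deterministic functional $\Phi_t$. Writing $\Theta_t:=\varphi\circ\Phi_t$, the quantity to estimate becomes $\EE[\Theta_t(B^H)]-\EE[\Theta_t(B)]$, a difference that now depends on $H$ only through the law of the Gaussian input.

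Next I would introduce independent copies $\widetilde B$ and $\widetilde B^H$ of $B$ and $B^H$ and interpolate via $W^\lambda:=\sqrt{1-\lambda}\,\widetilde B+\sqrt{\lambda}\,\widetilde B^H$, whose covariance is $R_\lambda=(1-\lambda)R_{1/2}+\lambda R_H$. The Gaussian (smart-path) interpolation formula then gives
\[
\EE[\Theta_t(B^H)]-\EE[\Theta_t(B)]=\tfrac12\int_0^1\EE\!\left[\int_0^t\!\!\int_0^t\big(R_H-R_{1/2}\big)(u,v)\,D_uD_v\,\Theta_t(W^\lambda)\,\dd u\,\dd v\right]\dd\lambda,
\]
the $\lambda$-derivative producing $\partial_\lambda R_\lambda=R_H-R_{1/2}$ paired against the second derivative of $\Theta_t$. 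The covariance difference is controlled elementarily: from $r^{2H}-r=\int_{1/2}^{H}2\,r^{2\theta}\ln r\,\dd\theta$ and the fact that $r\mapsto r^{2\theta}|\ln r|$ is bounded on $[0,T]$ uniformly in $\theta\in[\tfrac12,1]$, each of the three terms in $R_H-R_{1/2}=\tfrac12\big((s^{2H}-s)+(t^{2H}-t)-(|t-s|^{2H}-|t-s|)\big)$ is $O(H-\tfrac12)$ uniformly on $[0,T]^2$; thus $\sup_{u,v\in[0,T]}|R_H-R_{1/2}|(u,v)\le C_T(H-\tfrac12)$.

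It then remains to bound the inner double integral uniformly in $t\in[0,T]$ and in $H\to\tfrac12$, by a constant times $\int_0^t\!\int_0^t|\EE[D_uD_v\Theta_t(W^\lambda)]|\,\dd u\,\dd v$. This is the main obstacle, and it is where the regularity of $\varphi$ enters. By the chain rule the second Malliavin derivative of $\Theta_t$ involves two derivatives of $\varphi$, derivatives of the $\matcal{C}^{2+\gamma}$ map $F^{-1}$, and up to two derivatives of the flow $w\mapsto a(w)$; the latter is only once continuously differentiable, since \ref{hyp:h1} provides merely $\tilde g\in\matcal{C}^{1+\gamma}$. I would circumvent this by establishing the interpolation identity first for $\matcal{C}^\infty$ approximations of $b,\sigma,\varphi$ and then integrating by parts once in the Malliavin sense, so that the limiting bound requires only the first derivative of the flow---which exists and is controlled by Gronwall's lemma under \ref{hyp:h1}--\ref{hyp:h2}---together with the boundedness and $\beta$-Hölder continuity of $\varphi''$, guaranteeing that the resulting Skorohod integral has finite, uniformly bounded $L^1$ norm.

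Granting these uniform second-order bounds, pulling out $\sup|R_H-R_{1/2}|\le C_T(H-\tfrac12)$ from the displayed formula yields $\sup_{t\in[0,T]}|\EE\varphi(X^H_t)-\EE\varphi(X_t)|\le C_T(H-\tfrac12)$, as claimed. The covariance estimate and the boundedness of $\varphi$ are routine; the delicate points are justifying the infinite-dimensional interpolation and the uniform integrability of the Malliavin/Skorohod terms after the single integration by parts.
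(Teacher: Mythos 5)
Your route is genuinely different from the paper's. The paper fixes $t$, introduces $u(s,x):=\EE_x\varphi(X_{t-s})$ for the \emph{Markovian} limit diffusion, applies the fractional It\^o--Skorokhod formula to $u(s,X^H_s)$, and uses the backward PDE to cancel the drift terms; what survives is $\EE\int_0^t\partial^2_{xx}u(s,X^H_s)\bigl(Hs^{2H-1}-\tfrac12\bigr)\dd s$ plus a term in $D^H_rX^H_s-1$, each of order $H-\tfrac12$ by (\ref{ineq:DXH}). You instead remove the diffusion coefficient by Lamperti/Doss--Sussmann, represent both solutions by one deterministic functional $\Phi_t$ of the driving path, and interpolate the Gaussian covariances. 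Your covariance estimate $\sup_{[0,T]^2}|R_H-R_{1/2}|\le C_T(H-\tfrac12)$ is correct and does produce the right rate, and the representation $X^H_t=\Phi_t(B^H)$, $X_t=\Phi_t(B)$ (Stratonovich) is legitimate. What each approach buys: yours avoids the fractional It\^o formula and the Skorokhod calculus entirely and makes the $H$-dependence purely a covariance perturbation; the paper's approach avoids differentiating the solution map twice, because the second derivative lands on $u$, where parabolic Schauder regularity supplies $\partial^2_{xx}u\in L^\infty$ from $\matcal{C}^{1+\gamma}$ coefficients and $\matcal{C}^{2+\beta}$ data. (The paper also alludes to a third, coupling-based proof via pathwise estimates on $B^H-B^{1/2}$.)

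The genuine soft spot in your argument is exactly the one you flag and then defer: under \ref{hyp:h1} the drift $\tilde g$ of the transformed equation is only $\matcal{C}^{1+\gamma}$, so the It\^o map $w\mapsto a(w)$ is not twice Fr\'echet differentiable, and the second-derivative kernel $D_uD_v\Theta_t$ that your interpolation identity pairs against $R_H-R_{1/2}$ does not exist as stated. Your proposed repair (smooth $b,\sigma,\varphi$, prove the identity for the approximations, integrate by parts once in the Gaussian sense so that only $D\Theta$ survives, then pass to the limit) is plausible --- after one integration by parts the expression $\int\EE\bigl[D_v\Theta\;\delta_\lambda(K(\cdot,v))\bigr]\dd v$ involves only first-order objects, and $\|\delta_\lambda(K(\cdot,v))\|_{L^2}\le C_T(H-\tfrac12)$ still carries the rate --- but this step is the entire proof: you must check that the post-integration-by-parts bound is uniform in the mollification parameter (the second flow derivatives of the approximations blow up, so nothing second-order may survive the limit), that $D_uD_v\Theta_t$ for the approximations is a finite measure with atoms at $u=t$, $v=t$ and that the interpolation identity extends to such non-cylindrical functionals. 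None of this is carried out, and the assertion that the limiting bound needs ``the boundedness and $\beta$-H\"older continuity of $\varphi''$'' sits uneasily with the claim that only first derivatives of the flow are required. As written, the proposal is a credible programme rather than a proof; the paper's PDE route is shorter precisely because it never asks the solution map for a second derivative.
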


Our next theorem concerns the first passage time at 
threshold~1 of $X^H$ issued from $x_0<1$: $\tau^X_H := \inf\{t\geq 0: 
X^H_t=1\}$. 
The probability distribution of the first passage time $\tau_H$ of a 
fractional Brownian motion is not explictly known. \cite{Molchan} 
obtained the asymptotic behaviour of its tail 
distribution function and \cite{DecreusefondNualart08} obtained an 
upper bound on the 
Laplace transform of $\tau_H^{2H}$. 
The recent work of \cite{DelormeWiese} proposes an asymptotic 
expansion (in terms of $H-\tfrac{1}{2}$) of the density of $\tau_H$
formally obtained by perturbation analysis techniques.

\begin{theorem}\label{prop:majgap_0}
Suppose that $b$ and $\sigma$ satisfy Hypotheses \ref{hyp:h1'} and 
\ref{hyp:h2}
and let $x_0<1$. 
There exist constants $\lambda_0\geq 1$, $\mu\geq 0$ (both depending on 
$b$ and $\sigma$ only), $\alpha>0$ and $0<\eta_0<\tfrac{1-x_0}{2}$ such that: for 
all $\epsilon\in (0,\tfrac{1}{4})$ and $0<\eta\leq \eta_0$, there exists 
 $C_{\epsilon,\eta}>0$ such that
\begin{align*}
\forall \lambda\geq \lambda_0,\ \forall H\in[\tfrac{1}{2},1), \quad &\left|\EE\left(e^{-\lambda \tau^X_H}\right) - \EE\left(e^{-\lambda 
\tau^X_{\frac{1}{2}}}\right)\right| \\
& \quad\quad\quad\quad\leq C_{\epsilon,\eta} 
(H-\tfrac{1}{2})^{\frac{1}{2}-\epsilon}\ 
e^{-\alpha S(1-x_0-2\eta) (\sqrt{2\lambda + \mu^2} - \mu)},
\end{align*}
where $S(x) = x\wedge x^{\frac{1}{2H}}$. In the pure fBm case (where 
$b\equiv 
0$ and $\sigma\equiv 1$) the result holds with $\lambda_0=1$ and 
$\mu=0$.
\end{theorem}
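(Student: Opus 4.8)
The plan is to bound the difference of Laplace transforms by comparing first-passage times through the density estimates that the paper has developed. Let me think about the structure.The plan is to convert the Laplace-transform difference into a difference of passage-time distribution functions, then to split the task into an $H$-rate estimate and a $\lambda$-decay estimate and combine them at the end. Using the elementary identity $\EE(e^{-\lambda\tau})=\lambda\int_0^\infty e^{-\lambda t}\,\PP(\tau\le t)\,\dd t$ valid for a nonnegative $\tau$, I would first write
\begin{equation*}
\EE\!\left(e^{-\lambda\tau^X_H}\right) - \EE\!\left(e^{-\lambda\tau^X_{\frac{1}{2}}}\right) = \lambda\int_0^\infty e^{-\lambda t}\left[\PP\!\left(\tau^X_H\le t\right) - \PP\!\left(\tau^X_{\frac{1}{2}}\le t\right)\right]\dd t ,
\end{equation*}
so that everything reduces to controlling the integrand $\PP(\sup_{s\le t}X^H_s\ge 1)-\PP(\sup_{s\le t}X_s\ge 1)$ uniformly in $t$ and $H$, then integrating against the weight $\lambda e^{-\lambda t}$.

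For the decay in $\lambda$ I would establish a uniform-in-$H$ upper tail bound on the passage time. Reaching level $1$ from $x_0$ within a short time $t$ forces $X^H$ to travel the distance $1-x_0$ on $[0,t]$, an event whose probability is penalised, through the uniform Gaussian density estimates announced in the abstract, by a factor of the type obtained by \cite{DecreusefondNualart08} for $\EE\,e^{-\lambda(\tau^X_H)^{2H}}$, here made uniform as $H\to\tfrac12$. The buffer $\eta$ enters precisely here: stopping the comparison at the intermediate level $1-2\eta$ keeps one away from the boundary, where densities are cleanly bounded, which is why the travelled distance in the exponent is $1-x_0-2\eta$ rather than $1-x_0$. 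Converting between the time scales of $\tau^X_H$ and $(\tau^X_H)^{2H}$ — equivalently, accounting for the fact that fractional paths cover a distance $x$ in time of order $x^{1/H}$ — produces the function $S(x)=x\wedge x^{\frac{1}{2H}}$, while the drift $b$ contributes the shift turning $\sqrt{2\lambda}$ into $\sqrt{2\lambda+\mu^2}-\mu$.

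For the rate $(H-\tfrac12)^{\frac12-\epsilon}$ I would regularise the singular functional $\mathbf 1_{\{\tau^X_H\le t\}}$ (equivalently the running-supremum functional) at a spatial scale $\delta>0$. On the smoothed part the comparison is governed by marginal-law differences in the spirit of Theorem~\ref{th:gapLawXt^H}, contributing a term of order $(H-\tfrac12)\,\delta^{-k}$, where the fixed power $k$ comes from the H\"older norm of the regularisation; the regularisation error is the probability that $X^H$ lingers within $\delta$ of the threshold, which the uniform density bound controls by a positive power of $\delta$. Optimising $\delta$ balances these two contributions and yields the exponent $\tfrac12$, the arbitrarily small loss $\epsilon$ being the price of this optimisation and of discretising the supremum on a grid whose cardinality enters logarithmically. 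Multiplying the $H$-rate by the $\lambda$-decay factor then gives the announced product bound.

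The main obstacle is that, unlike the fixed-time marginal of Theorem~\ref{th:gapLawXt^H}, the first passage time is a genuinely singular, non-Markovian functional: fractional Brownian motion has no Markov property, so one cannot invoke a strong Markov argument to localise at the hitting instant, and there is no density of $X^H_t$ giving the law of $\tau^X_H$ directly. One is therefore forced to control the whole trajectory of $X^H$ in a neighbourhood of the threshold, uniformly in $H$ near $\tfrac12$, which is exactly what the uniform (in $t$ and $H$) Gaussian estimates are designed to provide. It is this unavoidable detour through the supremum, rather than through a single marginal, that degrades the optimal rate $(H-\tfrac12)$ of Theorem~\ref{th:gapLawXt^H} to $(H-\tfrac12)^{\frac12-\epsilon}$.
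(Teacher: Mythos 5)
Your reformulation via $\EE(e^{-\lambda\tau})=\lambda\int_0^\infty e^{-\lambda t}\,\PP(\tau\le t)\,\dd t$ is a genuinely different route from the paper's, which instead applies a Skorokhod--It\^o formula to $e^{-\lambda t}u_\lambda(X^H_t)$ (with $u_\lambda$ solving the Dirichlet ODE~(\ref{eq:ODE_Y-BM})), stops at $\tau_H\wedge T$, and splits the error into a time-change term $I_1$ and a stopped Skorokhod integral $I_2$. However, your sketch has gaps at exactly the points where the difficulty of the theorem lives. The comparison of the smoothed supremum functionals cannot be done ``in the spirit of Theorem~\ref{th:gapLawXt^H}'': that theorem compares one-dimensional marginals through the function $u(s,x)=\EE_x\varphi(X_{t-s})$ and the parabolic PDE it satisfies, and this device does not apply to $\Phi(\sup_{s\le t}X^H_s)$, which is a functional of the whole path. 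After discretising the supremum you would be comparing laws of the vectors $(X^H_{t_1},\dots,X^H_{t_N})$ and $(X_{t_1},\dots,X_{t_N})$, and you give no control of the constant in terms of $N$ (your claim that the grid cardinality ``enters logarithmically'' is unsupported: resolving the supremum to spatial accuracy $\delta$ over $[0,t]$ requires $N\sim t\,\delta^{-1/H}$ points) nor in terms of $t$, which ranges over all of $\R_+$ in your integral.

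More seriously, the concluding step ``multiplying the $H$-rate by the $\lambda$-decay factor then gives the announced product bound'' is not a valid inference: two separate estimates, one of order $(H-\tfrac{1}{2})^{\frac{1}{2}-\epsilon}$ and one of order $e^{-\alpha S(1-x_0-2\eta)\sqrt{2\lambda}}$, do not combine into their product; one needs a single joint estimate. Obtaining that joint estimate is what forces the paper's machinery: the failure of optional stopping for Skorokhod integrals of fBm, the subtraction of the $H=\tfrac{1}{2}$ contribution via Proposition~13 of~\cite{PeccatiThieullenTudor}, the decomposition of the supremum over unit intervals $[n,n+1]$ (each carrying a factor $e^{-\lambda n/(3(2+4\epsilon))}$ that is summed to preserve the exponential decay in $\lambda$), the localisation by the cut-off $\phi_\eta(B^H_\sbullet)$ --- which is where $\eta$ and the argument $1-x_0-2\eta$ actually originate, namely in restoring Malliavin smoothness on the event $\{\tau_H\ge t\}$, not in ``stopping the comparison at an intermediate level'' --- and the interpolation between the $L^2$ and $L^{2+8\epsilon}$ moments of $\bar{\Upsilon}_t-\bar{\Upsilon}_s$ inside the Garsia--Rodemich--Rumsey lemma, which is precisely the mechanism that trades the full rate $(H-\tfrac{1}{2})$ for $(H-\tfrac{1}{2})^{\frac{1}{2}-\epsilon}$ while retaining the decay in $\lambda$. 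Your proposal does not confront any of these obstructions, so as it stands it does not constitute a proof.
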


To prove the preceding theorem we need accurate estimates
on the density of $X^H_t$ with constants which are uniform w.r.t. small 
and long times and w.r.t. $H$ in $[\tfrac{1}{2},1)$. Our next theorem
improves estimates in~\cite{BaudoinEtAl,BKHT}. Our contributions 
consists in getting constants
which are uniform w.r.t. $t$ in the whole half-line $\R_+-\{0\}$ and 
$H$ when 
$H$ tends to~$\tfrac{1}{2}$.

\begin{theorem}\label{th:densityXH}
Assume that $b$ and $\sigma$ satisfy the conditions \ref{hyp:h1'} and 
\ref{hyp:h2}. Then for every $H\in[\tfrac{1}{2},1)$, the density of 
$X^H$ satisfies: there exists 
$C(b,\sigma)\equiv C>0$ such that, for all $t\in \R_+$ and $H\in[\tfrac{1}{2},1)$,
\begin{equation}\label{eq:boundDensXH}
\forall x\in \R,~~p_t^H(x) \leq \frac{e^{C t}}{\sqrt{2\pi t^{2H}}} 
~\exp\left(-\frac{(x-x_0)^2}{2 \|\sigma\|_\infty^2 t^{2H}}\right)  .
\end{equation}
\end{theorem}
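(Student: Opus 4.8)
The plan is to derive \eqref{eq:boundDensXH} from the Nourdin--Viens density formula, fed with Malliavin estimates on the first variation of the flow that are uniform in $H\in[\tfrac12,1)$. Since $b,\sigma\in C^2$ with $\sigma$ elliptic, $X^H_t\in\mathbb{D}^{1,2}$ and, differentiating \eqref{eq:fSDE_0}, its Malliavin derivative has the product form $D_sX^H_t=\sigma(X^H_s)\,J^H_{s,t}\,\mathbf 1_{[0,t]}(s)$, where $J^H_{s,t}>0$ is the Jacobian of the stochastic flow. The first preparatory step is to obtain moment bounds $\EE[(J^H_{s,t})^p]\le e^{C_p(t-s)}$ with $C_p$ independent of $H$; these rest on the exponential representation of $J^H$ and on $H$-uniform estimates for the Young integral $\int_s^t\sigma'(X^H_u)\circ\dd B^H_u$ occurring in its exponent, and this is where \ref{hyp:h1'} is used.

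Next I would apply the Nourdin--Viens formula to $F=X^H_t-\EE X^H_t$: writing $g(z)=\EE[\langle DX^H_t,-DL^{-1}X^H_t\rangle_{\HH}\mid F=z]$, the density of $F$ equals $\tfrac{\EE|F|}{2g(z)}\exp(-\int_0^z y\,g(y)^{-1}\,\dd y)$, so that two-sided bounds on $g$ translate into a Gaussian bound. For the exponent I would estimate $\langle DX^H_t,-DL^{-1}X^H_t\rangle_{\HH}$ from above, using the representation $\langle f,h\rangle_{\HH}=H(2H-1)\iint f_r h_v|r-v|^{2H-2}\,\dd r\,\dd v$, the bound $|\sigma|\le\|\sigma\|_\infty$, and a Schur estimate for the kernel, so as to extract the \emph{sharp} leading constant $\|\sigma\|_\infty^2 t^{2H}$ while relegating the Jacobian fluctuations to an $e^{Ct}$ factor; since the contraction $-DL^{-1}$ tends to inflate constants, a treatment through its Mehler representation rather than a crude Cauchy--Schwarz is likely needed here. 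The matching lower bound $g\ge c\,\sigma_0^2 t^{2H}e^{-Ct}$ uses the ellipticity $|\sigma|\ge\sigma_0$ and the positivity of both $J^H$ and the kernel, and together with $\EE|F|\le\sqrt{\mathrm{Var}(X^H_t)}=\sqrt{\EE\langle DX^H_t,-DL^{-1}X^H_t\rangle_{\HH}}$ it produces the prefactor $e^{Ct}/\sqrt{2\pi t^{2H}}$.

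This yields a bound of the desired shape but centred at $\EE X^H_t$ and with the true effective variance in the exponent. The last step transfers the centre to $x_0$ and replaces that variance by $\|\sigma\|_\infty^2 t^{2H}$. Because the coefficients are bounded, $|\EE X^H_t-x_0|\le K\,t$ and the effective variance differs from $\|\sigma\|_\infty^2 t^{2H}$ only by a term that is of lower order in $t$ than $t^{2H}$ (as $H<1$); completing the square shows that the induced discrepancy in the exponent is $O(t^{2-2H})\le C(1\vee t)$ and is thus absorbed into $e^{Ct}$, so that the clean constant $\|\sigma\|_\infty^2$ survives. Keeping this constant exactly, rather than up to an $H$- or $t$-dependent factor, is the delicate accounting in this step.

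The chief obstacle is \emph{uniformity as $H\downarrow\tfrac12$}. There the prefactor $H(2H-1)\to0$ while the kernel $|r-v|^{2H-2}\to|r-v|^{-1}$ diverges, the pair acting as an approximate identity that degenerates to the $L^2[0,t]$ inner product of the Brownian limit; every estimate above must therefore use constants that do not blow up in this regime. Concretely, the Schur bound $H(2H-1)\int_0^t|r-v|^{2H-2}\,\dd v\le 2H\,t^{2H-1}$ is engineered to stay bounded, and the Jacobian moment bounds together with the Malliavin integration-by-parts constants must likewise be made uniform in $H$. Securing this uniform control while retaining the sharp leading constant $\|\sigma\|_\infty^2$ and the uniformity in $t$ over all of $\R_+-\{0\}$ is exactly the improvement over \cite{BaudoinEtAl,BKHT}.
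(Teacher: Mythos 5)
First, a point of reference: this paper does not actually contain a proof of Theorem~\ref{th:densityXH} --- it explicitly defers to \cite{RichardTalay} --- so there is no in-text argument to compare yours against; I can only judge the proposal on its own terms. Doing so, I see two genuine gaps. The serious one is your claim that the Jacobian fluctuations can be ``relegated to an $e^{Ct}$ factor'' while the sharp constant $\|\sigma\|_\infty^2 t^{2H}$ survives in the exponent. The Nourdin--Viens formula converts an \emph{almost sure} (or conditional) upper bound $g(z)\le V$ into the exponent $\exp(-z^2/(2V))$; the moment bounds $\EE[(J^H_{s,t})^p]\le e^{C_p(t-s)}$ you prepare are of no use for that purpose. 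Pathwise, the Jacobian is only controlled by $J^H_{s,t}\le e^{\|b'\|_\infty (t-s)}$ (and in the non-Lamperti form its exponent even contains a Young integral against $B^H$), so the best a.s.\ bound your computation can yield is $\langle DX^H_t,-DL^{-1}X^H_t\rangle_{\HH}\le \|\sigma\|_\infty^2 t^{2H} e^{2\|b'\|_\infty t}$, giving the exponent $\exp\bigl(-z^2/(2\|\sigma\|_\infty^2 t^{2H} e^{2\|b'\|_\infty t})\bigr)$. An inflated variance \emph{inside} a Gaussian exponent cannot be traded for a multiplicative prefactor: the ratio of that bound to the claimed one diverges as $|x|\to\infty$ for any fixed $t>0$. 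The Mehler representation of $-DL^{-1}$ does not rescue this, since $P_\theta$ is a contraction and the bottleneck is already the a.s.\ size of $\|DX^H_t\|_{\HH}$.

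The second gap is the recentering step. With $m=\EE X^H_t$ and $V=\|\sigma\|_\infty^2 t^{2H}$ one has $(x-m)^2\ge (x-x_0)^2-2|x-x_0|\,|m-x_0|$, so the discrepancy in the exponent is $|x-x_0|\,|m-x_0|/V$, which is linear and unbounded in $x$ --- not the $O(t^{2-2H})$ you assert. Absorbing it costs either a factor $(1+\epsilon)$ on the variance (Young's inequality) or an $x$-dependent prefactor; either way the clean constant $\|\sigma\|_\infty^2$ is lost. (A milder issue of the same kind: the Nourdin--Viens prefactor $\EE|F|/(2g)\le \sigma_{\max}/(2\sigma_{\min}^2)$ exceeds $1/\sqrt{2\pi V}$ by a fixed factor, which cannot be absorbed into $e^{Ct}$ as $t\to 0$.) The structural reason a bound of the exact form \eqref{eq:boundDensXH} is attainable is the Lamperti reduction to $\sigma\equiv 1$ recalled in Section~2.1, which your proof does not exploit: after it, $X^H_t-x_0=\int_0^t\tilde b(X^H_s)\,\dd s+B^H_t$, and a Girsanov-type argument writes $p^H_t(x)$ as the \emph{exact} Gaussian density $e^{-(x-x_0)^2/(2t^{2H})}/\sqrt{2\pi t^{2H}}$ times a conditional expectation of the Girsanov density, which one then bounds by $e^{Ct}$ uniformly in the endpoint and in $H$; the constant $\|\sigma\|_\infty$ reappears when undoing the Lamperti change of variables. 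I would rebuild the argument around that reduction rather than around Nourdin--Viens.
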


Note that Theorems \ref{th:gapLawXt^H}, \ref{prop:majgap_0} and \ref{th:densityXH} are proved in \cite{RichardTalay}, including extensions to $H\in(\tfrac{1}{3},\tfrac{1}{2})$. We do not address the proof of Theorem \ref{th:densityXH} here.\\
We sketch the proofs of Theorems \ref{th:gapLawXt^H} and 
\ref{prop:majgap_0} in Section~\ref{sec:ErrorDec}. 
In Section~\ref{sec:conj} we consider a case which was 
not tackled in~\cite{RichardTalay}, that is, the case $\lambda<1$. 
Finally, in Section \ref{sec:numerics} we show numerical experiment 
results which illustrate Theorem~\ref{prop:majgap_0} and suggest that 
the $(H-\tfrac{1}{2})^{\frac{1}{2}-}$ rate is sub-optimal.

\section{Sketch of the proofs}\label{sec:ErrorDec}

\subsection{Reminders on Malliavin calculus}

We denote by $D$ and $\delta$ the classical derivative and Skorokhod 
operators of Malliavin calculus w.r.t. Brownian motion on the time 
interval~$[0,T]$ (see e.g. 
\cite{Nualart}). In the fractional Brownian motion framework
the Malliavin derivative $D^H$
is defined as an operator on the smooth random variables with values in
the Hilbert space $\matcal{H}_H$ defined as the completion of the space of step functions on $[0,T]$ with the following scalar product:
\begin{equation*}
\langle \varphi, \psi\rangle_{\matcal{H}_H} := \alpha_H \int_0^T \int_0^T 
\varphi_s\ \psi_t\ |s-t|^{2H-2}\ \dd s\dd t <\infty,
\end{equation*}
where $\alpha_H = H(2H-1)$.\\
The domain of $D^H$ in $L^p(\Omega)$ ($p>1$) is denoted by 
$\mathbb{D}^{1,p}$ and is the closure of the space of smooth random 
variables with respect to the norm:
\begin{equation*}
\|F\|_{1,p}^p = \EE(|F|^p) + \EE\left(\|D^H 
F\|_{\matcal{H}_H}^p\right).
\end{equation*}
Equivalently, $D^H$ and $\delta_H$ are defined as $D^H := (K_H^*)^{-1} 
D$ and $\delta_H(u) := \delta(K_H^* u)$ for 
$u \in (K_H^*)^{-1}(\text{dom} \delta)$ (cf.~\cite[p.288]{Nualart}),
where for any $H\in (\tfrac{1}{2},1)$ the operator $K_H^*$ is defined  
as follows: for any $\varphi$ with suitable integrability properties,
\begin{align*}
K_H^*\varphi(s) = (H-\tfrac{1}{2}) c_H\int_s^T 
\left(\frac{\theta}{s}\right)^{H-\frac{1}{2}} 
(\theta-s)^{H-\frac{3}{2}}\ \varphi(\theta)\ \dd \theta
\end{align*}
with
$$ c_H := \left( \frac{2H\ \Gamma(3/2-H)}{\Gamma(H+\tfrac{1}{2})\ 
\Gamma(2-2H)}\right)^{\tfrac{1}{2}}.$$

We denote by $\|\cdot\|_{\infty,[0,T]}$ the sup norm and $\|\cdot\|_\alpha$ the Hölder norm for functions on the interval $[0,T]$. Under Assumption \ref{hyp:h2}, there exists a transformation $F$ called the Lamperti transform, such that $X^H$ is mapped to the solution of ~(\ref{eq:fSDE_0}) with coefficients $\tilde{b} = \frac{b\circ F^{-1}}{\sigma\circ F^{-1}}$ and $\sigma\equiv 1$. Since $F$ is one-to-one, we assume in the rest of this paper that $\sigma$ is uniformly $1$. See \cite{RichardTalay} for details on the Lamperti transform in this framework.\\
Let $X^H$ be the solution to~(\ref{eq:fSDE_0}). There exist modifications of the processes $X^H$ and $D^H_\cdot X^H_\cdot$ such that for any $\alpha<H$ it a.s. holds that
\begin{equation} \label{ineq:DXH}
\begin{cases}
& \|X^H\|_{\infty,[0,T]} \leq C_T(1+|x_0| + \|B^H\|_{\infty,[0,T]}), \\
& \|X^H\|_\alpha \leq \|B^H\|_\alpha + C_T
(1+|x_0|+\|B^H\|_{\infty,[0,T]}), \\
& \|D^H_\cdot X^H_\cdot\|_{\infty,[0,T]^2} \leq C_T\ ,\\
& \sup_{r\leq t} \frac{|D^H_r X^H_t - 1|}{t-r} \leq C_T\ , \forall t\in 
[0,T]\ .
\end{cases}
\end{equation}
These inequalities are simple consequences of the definition of $X^H$, assumptions \ref{hyp:h1} and \ref{hyp:h2}, and the equality: $D^H_r X^H_t = \mathbf{1}_{\{r\leq t\}}\left(1+\int_r^t D^H_r X^H_s b'(X^H_s) ds\right)$ (see Section 3 in \cite{RichardTalay} for more details).

\subsection{Sketch of the proof of Theorem~\ref{th:gapLawXt^H}}

Proving Theorem~\ref{th:gapLawXt^H} is easy. A first technique consists 
in using
pathwise estimates on $B^H-B^{1/2}$ with $B^H$ and $B^{1/2}$ defined on the 
same 
probability space. A second technique, which we present here in order 
to introduce the reader to the method of proof for 
Theorem~\ref{prop:majgap_0}, consists in differentiating $u(t,X^H_t)$ 
where 
$$ u(s,x) := \EE_x\left(\varphi(X_{t-s})\right), $$
which leads to
\begin{align*}
u(t,X_t^H) &= u(0, x_0) + \int_0^t \left(\partial_s u(s,X_s^H) + 
\partial_x 
u(s,X_s^H) b(X_s^H)\right)\ \dd s + \delta_H\left(\mathbf{1}_{[0,t]} 
\partial_x u(\cdot,X_\cdot^H)  \right) \\
& \quad\quad + \alpha_H \int_0^t \int_0^s |r-s|^{2H-2} D^H_r X_s^H 
~\partial^2_{xx} u(s,X_s^H)~\dd r \dd s.
\end{align*}
As $u$ solves a parabolic PDE driven by the generator of $(X_t)$  and 
as the Skorokhod integral has zero mean we get
\begin{equation*}
\begin{split}
\EE \varphi(X_t^H) - \EE_{x_0} \varphi(X_t) &= \EE u(t,X_t^H) - u(0, 
x_0)\\
&= \EE\int_0^t \partial^2_{xx} u(s,X_s^H) \left( Hs^{2H-1} 
-\tfrac{1}{2}\right)
\dd s \\
&\quad +\alpha_H \EE\int_0^t \int_0^s |r-s|^{2H-2} (D^H_r X_s^H-1) 
\partial^2_{xx} u(s,X_s^H)\ \dd r \dd s.
\end{split}
\end{equation*}
It then remains to use the estimates~(\ref{ineq:DXH}).

\subsection{Sketch of the proof of Theorem~\ref{prop:majgap_0}}\label{subsec:proof_Thm}

We now sketch the proof of Theorem~\ref{prop:majgap_0}.
We will soon limit ourselves to the pure fBm case ($b(x)\equiv0$ and 
$\sigma\equiv 1$) in order to show the main ideas used in the proof and 
avoid too many technicalities. For now, our previous remark on the Lamperti transform implies that $\sigma$ can be chosen uniformly equal to $1$.

Our Laplace transforms sensititivity analysis is based on a PDE 
representation of first hitting time Laplace transforms in the case
$H=\tfrac{1}{2}$.

For $\lambda>0$ it is well known that
$$ \forall x_0\in(-\infty,1],~
\EE_{x_0}\left(e^{-\lambda \tau_{\frac{1}{2}}}\right)
=u_\lambda(x_0), $$
where the function $u_\lambda$ is the classical solution with bounded 
continuous first and second derivatives to
\begin{equation}\label{eq:ODE_Y-BM}
\begin{cases}
2b(x)u_\lambda'(x) + u_\lambda''(x) = 2 \lambda u_\lambda(x),~x<1, \\
u_\lambda(1) = 1, \\
\lim_{x\rightarrow -\infty} u_\lambda(x) = 0.
\end{cases}
\end{equation}

For any $t\in[0,T]$ the process $\mathbf{1}_{[0,t]} 
u_\lambda'(B^H_\sbullet)\ e^{-\lambda \sbullet}$ is in $\textrm{dom}\ 
\delta_H^{(T)}$. 
One thus can apply It\^o's formula to~$e^{-\lambda t} u_\lambda(X^H_{t})$
(see \cite[Section 2]{RichardTalay} and \cite{Nualart}). As $u_\lambda$ 
satisfies~(\ref{eq:ODE_Y-BM}), for any $t\leq T\wedge \tau_H$ we 
get
\begin{align*}
e^{-\lambda t} u_\lambda(X_t^H)  &=u_\lambda(x_0)+ \int_0^t e^{-\lambda s} \left(u_\lambda'(X_s^H) \tilde{b}(X_s^H) - \lambda u_\lambda(X_s^H)\right) \ \dd s + \delta^{(T)}_H \left( \mathbf{1}_{[0,t]}(\sbullet) e^{-\lambda \sbullet} u_\lambda'(X_\sbullet^H)\right) \\
& \quad \quad +\alpha_H \int_0^t \int_0^t D_v^H \left( e^{-\lambda s} u_\lambda'(X_s^H)\right) |s-v|^{2H-2} \ \dd v \dd s\ ,
\end{align*}
where the last term corresponds to the Itô term. 
Using $D_v^H X_s^H = \mathbf{1}_{[0,s]}(v)\left(1+ \int_0^s b'(X_\theta^H)\ D^H_v X_\theta^H \ \dd \theta\right)$ and the ODE (\ref{eq:ODE_Y-BM}) satisfied by $u_\lambda$, we get
\begin{align*}
e^{-\lambda t} u_\lambda(X_t^H)  &=u_\lambda(x_0) +\int_0^t \left(\alpha_H \int_0^s|s-v|^{2H-2} \dd v -\frac{1}{2}\right) e^{-\lambda s} u_\lambda''(X_s^H)\ \dd s  \\
&\quad\quad +\delta^{(T)}_H \left( \mathbf{1}_{[0,t]}(\sbullet) e^{-\lambda \sbullet} u_\lambda'(X_\sbullet^H)\right)  \\
&\quad\quad + \alpha_H \int_0^t \int_0^s e^{-\lambda s}  w_\lambda''(X_s^H)\ I(v,s)\ |s-v|^{2H-2} \ \dd v \dd s ,
\end{align*}
where $I(v,s) = \mathbf{1}_{\{v\leq s\}} \int_v^s b'(X_\theta^H)\ D^H_v X_\theta^H \ \dd \theta$. Observe that the last term vanishes for $H$ close to $\tfrac{1}{2}$, since $\alpha_H |s-v|^{2H-2}$ is an approximation of the identity and $I(v,s)$ converges to $0$ as $|v-s|\rightarrow 0$. This argument is made rigorous in \cite{RichardTalay}.

We now limit ourselves to the pure fBm case ($b(x)\equiv0$ and 
$\sigma\equiv 1$) to make the rest of the computations more understandable, although the differences will be essentially technical. Given that now, $u_\lambda'(x) = \sqrt{2\lambda} u_\lambda(x)$, the previous equality becomes
\begin{align*}
u_\lambda(B^H_{t})\ e^{-\lambda t}
&= u_\lambda(x_0) + \sqrt{2\lambda}\delta_H^{(T)}\left(\mathbf{1}_{[0,t]} 
u_\lambda(B^H_\sbullet)\ e^{-\lambda \sbullet}\right)+2\lambda \int_0^{t} 
\left(H s^{2H-1}-\tfrac{1}{2}\right)\ u_\lambda(B^H_s)\ e^{-\lambda s}\ \dd s.
\end{align*}
Evaluate the previous equation at $T\wedge \tau_H$, take 
expectations and let $T$ tend to infinity. For any $\lambda\geq 0$ it 
comes:
\begin{align}\label{eq:gapLaplace}
\EE\left(e^{-\lambda \tau_H}\right) - \EE\left(e^{-\lambda 
\tau_{\frac{1}{2}}}\right) &= \EE \left[ 2\lambda \int_0^{\tau_H} (H 
s^{2H-1}-\tfrac{1}{2}) u_\lambda(B^H_s)\ e^{-\lambda s}\ \dd s \right] \\
&\quad\quad+\sqrt{2\lambda}\lim_{T\rightarrow 
\infty}\EE\left[\left.\delta_H^{(T)}\left(\mathbf{1}_{[0,t]}
u_\lambda(B^H_\sbullet)\
 e^{-\lambda \sbullet}\right)\right|_{t=\tau_H\wedge T}\right]\nonumber \\
&=: I_1(\lambda) + I_2(\lambda).
\end{align}

\begin{proposition}\label{prop:I1}
Let $T$ be the function of $\lambda\in \R_+$ defined by $T(\lambda) = 
(2\lambda)^{1-\frac{1}{4H}}$ if $\lambda\leq 1$ and $T(\lambda) = 
\sqrt{2\lambda}$ if $\lambda>1$.
There exists a constant $C>0$ such that
$$ |I_1(\lambda)| \leq C\ (H-\tfrac{1}{2})\ 
e^{-\frac{1}{4} S(1-x_0) T(\lambda)} ,$$
where $S$ is the function defined in Theorem \ref{prop:majgap_0}.
\end{proposition}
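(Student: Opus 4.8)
The plan is to reduce the stochastic estimate to a purely analytic one–dimensional integral and then to run a Laplace-type analysis in the time variable. Throughout I work in the pure fBm case announced before the statement, so that $b\equiv0$, $\sigma\equiv1$ and the boundary value problem \eqref{eq:ODE_Y-BM} is solved explicitly by $u_\lambda(y)=e^{\sqrt{2\lambda}(y-1)}$; in particular $0<u_\lambda(y)\le 1$ for every $y\le 1$, and on $\{\tau_H>s\}$ one has $B^H_s<1$. Taking absolute values inside the expectation defining $I_1(\lambda)$ in \eqref{eq:gapLaplace} and applying Tonelli, I would first write
\begin{equation*}
|I_1(\lambda)|\le 2\lambda\int_0^{\infty}\left|Hs^{2H-1}-\tfrac12\right|e^{-\lambda s}\,F_\lambda(s)\,\dd s,\qquad F_\lambda(s):=\EE\!\left[u_\lambda(B^H_s)\,\mathbf{1}_{\{\tau_H>s\}}\right].
\end{equation*}
The proof then splits into three tasks: extracting the factor $(H-\tfrac12)$ from the time weight, estimating $F_\lambda$, and carrying out the $s$–integration.

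For the first task, since the weight $Hs^{2H-1}-\tfrac12$ vanishes identically at $H=\tfrac12$, the mean value theorem applied to $H\mapsto Hs^{2H-1}$ gives
\begin{equation*}
\left|Hs^{2H-1}-\tfrac12\right|\le (H-\tfrac12)\,s^{2H-1}\bigl(1+2|\ln s|\bigr),
\end{equation*}
uniformly in $H\in[\tfrac12,1)$, which is the source of the announced prefactor (the logarithmic and polynomial growth will have to be absorbed by the exponential decay at the end). For the second task I would bound $F_\lambda$ by relaxing the path constraint to its endpoint, $\{\tau_H>s\}\subset\{B^H_s\le1\}$, and integrating the explicit density of Theorem~\ref{th:densityXH} (exactly Gaussian here) against $u_\lambda$. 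Completing the square and using the Gaussian tail bound $\Phi(-y)\le (y\sqrt{2\pi})^{-1}e^{-y^2/2}$ produces a two–regime estimate split at the scale $s_0:=\bigl((1-x_0)/\sqrt{2\lambda}\bigr)^{1/(2H)}$, at which the Gaussian mean $x_0+\sqrt{2\lambda}\,s^{2H}$ crosses the threshold $1$: the global bound
\begin{equation*}
F_\lambda(s)\le e^{-\sqrt{2\lambda}(1-x_0)+\lambda s^{2H}}
\end{equation*}
is sharp for $s\le s_0$, while for $s>s_0$ the same computation collapses to $F_\lambda(s)\le C(\sqrt{2\lambda}\,s^{H})^{-1}e^{-(1-x_0)^2/(2s^{2H})}$, the two expressions matching at $s_0$ where both equal, up to constants, $e^{-\sqrt{2\lambda}(1-x_0)/2}$.

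For $\lambda\ge1$ — the only range needed in Theorem~\ref{prop:majgap_0} — these ingredients combine routinely. On $s\le1$ one has $s^{2H}\le s$, so the first bound together with $e^{-\lambda s}$ yields an integrand controlled by $e^{-\sqrt{2\lambda}(1-x_0)}$; on $s>1$ the factor $e^{-\lambda s}$ (with $\lambda\ge1$) dominates both the polynomially growing weight and the $s>s_0$ bound. Since $1-x_0>0$ is fixed, the slack between $e^{-\sqrt{2\lambda}(1-x_0)}$ and the target $e^{-\frac14 S(1-x_0)\sqrt{2\lambda}}$ — here $\tfrac14 S(1-x_0)\le 1-x_0$ because $S(x)\le x$ — absorbs the factor $2\lambda$ and all logarithmic and polynomial terms into a constant $C=C(x_0)$, giving the claim with $T(\lambda)=\sqrt{2\lambda}$.

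The genuine difficulty, and the reason the weaker exponent $T(\lambda)=(2\lambda)^{1-1/(4H)}$ appears for $\lambda\le1$, lies entirely in this last regime. When $\lambda$ is small the mass of the $s$–integral sits at large times $s\gg s_0$, precisely where the endpoint relaxation $\{\tau_H>s\}\subset\{B^H_s\le1\}$ discards too much: the spatial decay $e^{-(1-x_0)^2/(2s^{2H})}$ has worn off, and the surviving bound, tested against the growing weight $s^{2H-1}$, yields a time integral that diverges as $\lambda\to0$. The remedy is to retain the persistence information through a Molchan-type first–passage estimate $\PP(\tau_H>s)\lesssim s^{-(1-H)}$ and to interpolate it against the $u_\lambda$–decay; a Laplace analysis then concentrates near $s\approx s_0$, where $\lambda s_0\asymp (1-x_0)^{1/(2H)}\,(2\lambda)^{1-1/(4H)}$, and this is exactly what produces the exponent $\tfrac14 S(1-x_0)T(\lambda)$ and, through $(1-x_0)^{1/(2H)}$, the second branch of $S$. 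Making this interpolation quantitative and uniform in $H\in[\tfrac12,1)$, so that the $H$–dependent polynomial prefactors are genuinely dominated by $e^{-\lambda s_0}$, is the step I expect to be the crux of the argument.
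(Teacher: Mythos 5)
Your strategy is exactly the paper's: Fubini, extraction of the factor $(H-\tfrac{1}{2})$ from the weight $Hs^{2H-1}-\tfrac{1}{2}$, and relaxation of $\{\tau_H\geq s\}$ to $\{B^H_s\leq 1\}$ so that the Gaussian density can be integrated against $u_\lambda(x)=e^{-(1-x)\sqrt{2\lambda}}$; your two-regime bound on $F_\lambda$ by completing the square is precisely the computation the paper leaves implicit, and your treatment of $\lambda\geq 1$ is correct. One slip in that part: the inequality $|Hs^{2H-1}-\tfrac{1}{2}|\leq (H-\tfrac{1}{2})\,s^{2H-1}(1+2|\ln s|)$ is false for small $s$ (at $s=0.01$, $H=0.9$ the left side is about $0.48$ and the right side about $0.10$); the mean value theorem controls the supremum of $|s^{2h-1}(1+2h\ln s)|$ over $h\in[\tfrac{1}{2},H]$, which requires the factor $1\vee s^{2H-1}$ rather than $s^{2H-1}$. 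This is harmless for your argument, since on $s\leq 1$ you only need integrability of the logarithm, but as written the bound is wrong.

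The genuine gap is the branch $\lambda\leq 1$, which the proposition does cover: there $T(\lambda)=(2\lambda)^{1-\frac{1}{4H}}$ is bounded, so the claim amounts to $|I_1(\lambda)|\leq C(H-\tfrac{1}{2})$ uniformly as $\lambda\to 0$. You correctly diagnose that the endpoint relaxation fails in this regime --- indeed the Gaussian bound alone leaves a term of order $\sqrt{2\lambda}\int_1^\infty s^{H-1}e^{-\lambda s}\,\dd s\asymp \lambda^{\frac{1}{2}-H}$, which diverges as $\lambda\to 0$ for fixed $H>\tfrac{1}{2}$ --- but what you offer in its place is a program, not a proof, and its most natural implementation does not close: a H\"older interpolation between Molchan's persistence bound $\PP(\tau_H>s)\lesssim s^{-(1-H)}$ and the Gaussian moment of $u_\lambda(B^H_s)$ produces a $\lambda$-power $2-3H+(4H-3)/(2p)$, which is negative for every admissible $p>1$ once $H>\tfrac{2}{3}$. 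The decoupling inherent in H\"older discards the fact that, conditionally on $\{\tau_H>s\}$ with $s$ large, $B^H_s$ is of order $-s^H$ and hence $u_\lambda(B^H_s)\approx e^{-\sqrt{2\lambda}s^H}$; it is this joint behaviour that concentrates the Laplace integral at $s\asymp(2\lambda)^{-\frac{1}{2H}}$ and produces the exponent $(2\lambda)^{1-\frac{1}{4H}}$, as your own heuristic at the end indicates. So you have correctly located the crux, but the stated estimate is unproved for $\lambda\leq 1$, and the interpolation you propose would have to be replaced by a genuinely joint estimate on $(\tau_H,B^H_s)$.
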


\begin{proof}[Sketch of proof]
From Fubini's theorem, we get
\begin{equation*}
I_1(\lambda) = 2\lambda \int_0^{+\infty} (H 
s^{2H-1}-\tfrac{1}{2}) \EE \left[ \mathbf{1}_{\{\tau_H\geq s\}}u_\lambda(B^H_s)\right]\ e^{-\lambda s}\ \dd s 
\end{equation*}
The inequalities
\begin{equation*}
\forall H\in(\tfrac{1}{2},1),~\forall s\in (0,\infty),~|Hs^{2H-1} -\tfrac{1}{2}| 
\leq (H-\tfrac{1}{2})\ (1 \vee s^{2H-1}) |1+2H \log s|
\end{equation*}
and 
\begin{equation*}
\EE \left[ \mathbf{1}_{\{\tau_H\geq s\}}u_\lambda(B^H_s)\right] \leq \int_{-\infty}^1 u_\lambda(x) \frac{e^{-\frac{x^2}{2s^{2H}}}}{\sqrt{2\pi s^{2H}}}  \dd x = \int_{-\infty}^1 e^{-(1-x)\sqrt{2\lambda}} \frac{e^{-\frac{x^2}{2s^{2H}}}}{\sqrt{2\pi s^{2H}}}  \dd x
\end{equation*}
lead to the desired result. 
\end{proof}
Note that this proof adapts to diffusions, but that the density of $X^H$ is now needed, which is the purpose of Theorem \ref{th:densityXH}.

Compared to the proof of Theorem~\ref{th:gapLawXt^H}, an important 
difficulty appears when estimating~$|I_2(\lambda)|$: as the optional 
stopping theorem does not hold for 
Skorokhod integrals of the fBm one has to carefully estimate 
expectations of stopped 
Skorokhod integrals and obtain estimates which decrease infinitely fast 
when $\lambda$ goes to infinity.
We obtained the following result.

\begin{proposition}
\begin{equation}\label{eq:resultI2}
\forall\lambda>1,~|I_2(\lambda)| \leq C 
(H-\tfrac{1}{2})^{\frac{1}{2}-\epsilon} 
e^{-\alpha S(1-x_0-2\eta) \sqrt{2\lambda}} .
\end{equation}
\end{proposition}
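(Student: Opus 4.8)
The plan is to estimate $I_2(\lambda)$ directly, the obstruction being that no optional stopping theorem holds for Skorokhod integrals: if $G(t):=\delta_H^{(T)}\!\big(\mathbf{1}_{[0,t]}u_\lambda(B^H_\sbullet)e^{-\lambda\sbullet}\big)$, then $\EE[G(t)]=0$ for every deterministic $t$, yet substituting the random time $\tau_H\wedge T$ for $t$ produces a nonzero contribution. Since it is precisely $\EE[G(\tau_H\wedge T)]$ that we must control, the first step is to isolate, inside $G(\tau_H\wedge T)$, the part that does obey optional stopping from a genuinely small remainder.

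To this end I would use the transfer identity $\delta_H^{(T)}(u)=\delta(K_H^*u)$ and write, for each fixed $t$,
\[
G(t)=\delta\big(\mathbf{1}_{[0,t]}v\big)+\delta\big((K_H^*-\mathrm{Id})(\mathbf{1}_{[0,t]}v)\big),\qquad v_s:=u_\lambda(B^H_s)e^{-\lambda s}.
\]
Since $v$ is adapted to the underlying Brownian filtration, $\delta(\mathbf{1}_{[0,t]}v)=\int_0^t v_s\,\dd W_s$ is an It\^o integral, hence a martingale in $t$; evaluated at the stopping time $\tau_H\wedge T$ its expectation vanishes by optional stopping (this adapted part reproduces the $H=\tfrac12$ computation, for which the corresponding It\^o integral is a true martingale). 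Consequently
\[
I_2(\lambda)=\sqrt{2\lambda}\,\lim_{T\to\infty}\EE\Big[\delta\big((K_H^*-\mathrm{Id})(\mathbf{1}_{[0,t]}v)\big)\big|_{t=\tau_H\wedge T}\Big],
\]
and the whole matter reduces to bounding the expectation of this anticipating remainder at the random time.

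For the remainder I would exploit that $K_H^*\to\mathrm{Id}$ as $H\to\tfrac12$, so that $K_H^*-\mathrm{Id}$ is a fractional integration of order $H-\tfrac12$: away from the cutoff it contributes at order $H-\tfrac12$, but the sharp indicator $\mathbf{1}_{[0,t]}$ at the upper limit creates an endpoint term whose $L^2$ norm scales like $(H-\tfrac12)^{1/2}$. Estimating the Brownian Skorokhod integral of this remainder through Meyer's inequality and the boundedness of $v$ (here $D^H_v B^H_s=\mathbf{1}_{\{v\le s\}}$ is explicit; in the diffusion case one invokes (\ref{ineq:DXH}) instead) yields the square-root rate, the residual $\epsilon$-loss coming from a borderline logarithmic integrability in time, of the type $|1+2H\log s|$ already met in the estimate of $I_1$.

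It remains to produce the exponential decay in $\lambda$. Inserting $u_\lambda(x)=e^{-(1-x)\sqrt{2\lambda}}$ and the Gaussian law of $B^H_s$ (mean $x_0$, variance $s^{2H}$) on the event $\{\tau_H\ge s\}\subset\{B^H_s\le 1\}$, a Laplace-type optimization of the exponent $-(1-x)\sqrt{2\lambda}-\tfrac{(x-x_0)^2}{2s^{2H}}-\lambda s$ over $x\le 1$ and $s>0$ produces, for $\lambda>1$ (where the relevant scale is $\sqrt{2\lambda}$), a factor of order $e^{-\alpha S(1-x_0)\sqrt{2\lambda}}$; the two competing regimes, the direct scale and the variance scale $s^{2H}$, are exactly what yields $S(x)=x\wedge x^{1/(2H)}$, and localizing the trajectory at distance $2\eta$ from the threshold to keep all constants uniform replaces $1-x_0$ by $1-x_0-2\eta$ and fixes some $\alpha<1$. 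The main obstacle is the second step: because optional stopping fails, the anticipating remainder must be controlled at the random time $\tau_H\wedge T$ with only moment estimates, and it is the fractional-order-$(H-\tfrac12)$ nature of $K_H^*-\mathrm{Id}$ together with the endpoint effect of the cutoff that both creates the main technical work and caps the rate at the sub-optimal $(H-\tfrac12)^{1/2-\epsilon}$.
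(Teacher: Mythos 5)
Your first step coincides exactly with the paper's: the adapted part $\delta(\mathbf{1}_{[0,t]}v)$ is an It\^o integral with respect to the underlying Wiener process, so its expectation at the bounded stopping time $\tau_H\wedge T$ vanishes (the paper invokes Proposition 13 of \cite{PeccatiThieullenTudor} for precisely this), and everything reduces to the stopped anticipating integral $\Upsilon_t:=\delta\big((K_H^*-\mathrm{Id})(\mathbf{1}_{[0,t]}v)\big)$ evaluated at $t=\tau_H\wedge T$. The genuine gap is in what you do next. You propose to conclude via ``Meyer's inequality and the boundedness of $v$,'' but moment bounds on $\Upsilon_t$ at \emph{fixed} times give no control whatsoever on $\EE[\Upsilon_{\tau_H\wedge T}]$: $\Upsilon$ is not a martingale, which is the whole point of the difficulty you yourself identify. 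The paper's actual mechanism is to dominate $|\EE[\Upsilon_{\tau_H\wedge N}]|$ by $\EE\sup_{t\le N}\mathbf{1}_{\{\tau_H\geq t\}}|\Upsilon_t|$, split $[0,N]$ into unit intervals, replace $\Upsilon$ on the event $\{\tau_H\geq t\}$ by a localized version $\bar{\Upsilon}$ whose integrand carries the cutoff $\phi_\eta(B^H_\sbullet)$ --- legitimate by the local property of $\delta$ --- and then bound each $\EE\sup_{t\in[n,n+1]}|\bar{\Upsilon}_t-\bar{\Upsilon}_n|$ by the Garsia--Rodemich--Rumsey lemma. The localization is not merely a device ``to keep constants uniform'': it is what makes the supremum estimable at all (the indicator $\mathbf{1}_{\{\tau_H\ge t\}}$ is not Malliavin smooth), and it is what injects both the factor $e^{-\alpha S(1-x_0-2\eta)\sqrt{2\lambda}}$ and the geometric decay $e^{-c\lambda n}$ needed to sum over the unit intervals. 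None of this machinery appears in your sketch, and some version of it cannot be avoided.

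Relatedly, your accounting of the rate does not match how it actually arises. The $L^2$ increment bound (Lemma \ref{lem:EstDeriv}) carries the \emph{full} factor $(H-\tfrac12)$, not $(H-\tfrac12)^{1/2}$, and the degradation to $(H-\tfrac12)^{1/2-\epsilon}$ is neither an ``endpoint effect of the cutoff'' nor a logarithmic borderline of the $|1+2H\log s|$ type met in $I_1$. It is the price of upgrading an $L^2$ estimate to a supremum estimate: GRR requires moments of order $p>2$, the higher moments (Lemma \ref{lem:boundAcal}) carry no smallness in $H-\tfrac12$, and the interpolation $\EE|\Delta|^{2+4\epsilon}\leq\|\Delta\|_{L^2}\,\EE\big(|\Delta|^{2+8\epsilon}\big)^{1/2}$ effectively halves the exponent, yielding $(H-\tfrac12)^{1/(2+4\epsilon)}\approx(H-\tfrac12)^{1/2-\epsilon}$.
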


\begin{proof}
Proposition 13 of~\cite{PeccatiThieullenTudor} shows that
$$\forall T>0,\quad 
{\EE\left(\left.\delta^{(T)}
(\mathbf{1}_{[0,t]}(\sbullet)
u_\lambda(B^H_\sbullet) 
e^{-\lambda\sbullet})\right|_{t=T\wedge \tau_H} \right) = 0}.$$ 
Thus $I_2(\lambda)$ satisfies
\begin{align*}
|I_2(\lambda)| &= \sqrt{2\lambda}\ \left|\lim_{N\rightarrow \infty}
\EE\left[\left.\delta_H^{(N)}\left(
\mathbf{1}_{[0,t]}(\sbullet)
u_\lambda(B^H_\sbullet) 
e^{-\lambda\sbullet}
\right)\right|_{t=\tau_H\wedge N} - 
\left.\delta^{(N)}\left(
\mathbf{1}_{[0,t]}(\sbullet)
u_\lambda(B^H_\sbullet) 
e^{-\lambda\sbullet} \right)\right|_{t=\tau_H\wedge N}\right] \right|\\
&=\sqrt{2\lambda} \left|\lim_{N\rightarrow \infty}
\EE\left[\left.\delta^{(N)}\left(\{K_H^*-\text{Id}\}(
\mathbf{1}_{[0,t]}(\sbullet)
u_\lambda(B^H_\sbullet) 
e^{-\lambda\sbullet})\right)\right|_{t=\tau_H\wedge N}\right] \right|\\
&\leq \sqrt{2\lambda} \lim_{N\rightarrow \infty} 
\EE\sup_{t\in [0,\tau_H\wedge N]}
|\delta^{(N)}\left(\{K_H^*-\text{Id}\}(
\mathbf{1}_{[0,t]}(\sbullet)
u_\lambda(B^H_\sbullet) 
e^{-\lambda\sbullet})\right)| \\
&\leq \sqrt{2\lambda} \lim_{N\rightarrow \infty} 
\EE\sup_{t\in [0,N]} \left[\mathbf{1}_{\{\tau_H\geq t\}} 
|\delta^{(N)}\left(\{K_H^*-\text{Id}\}(
\mathbf{1}_{[0,t]}(\sbullet)
u_\lambda(B^H_\sbullet) 
e^{-\lambda\sbullet})\right)|\right].
\end{align*}
Define the field $\{U_t(v),t\in[0,N], v\geq0\}$ 
and the process $\{\Upsilon_t,t\in[0,N]\}$ by
\begin{equation*}
\forall t\in[0,N],~
U_t(v) = \{K_H^* - \textrm{Id}\}\left( 
\mathbf{1}_{[0,t]}(\sbullet)\ 
u_\lambda(B_\sbullet^H)\ e^{-\lambda \sbullet} \right)(v),
\end{equation*}
and
\begin{equation*}
\Upsilon_t = \delta^{(N)}(U_t(\sbullet)).
\end{equation*}
For any real-valued function $f$ with $f(0)=0$ one has
\begin{equation*}
\begin{split}
\mathbf{1}_{\{\tau_H\geq t\}} |f(t)| &\leq 
\mathbf{1}_{\{\tau_H\geq t\}}\sum_{n=0}^{[t]} 
\sup_{s\in[n,n+1]} \mathbf{1}_{\{\tau_H\geq s\}}|f(s)-f(n)| \\
&\leq 
\sum_{n=0}^{[t]} 
\sup_{s\in[n,n+1]} \mathbf{1}_{\{\tau_H\geq s\}}|f(s)-f(n)|.
\end{split}
\end{equation*}
Therefore
\begin{equation}\label{eq:1stBoundI2}
\begin{split}
|I_2(\lambda)| 
&\leq \sqrt{2\lambda} \lim_{N\rightarrow \infty} 
\EE\sup_{t\in [0,N]} \left[\mathbf{1}_{\{\tau_H\geq t\}} |\Upsilon_t|
\right] \\
&\leq \sqrt{2\lambda} \lim_{N\rightarrow \infty} \sum_{n=0}^{N-1}
\EE\sup_{t\in [n,n+1]}\left[\mathbf{1}_{\{\tau_H\geq t\}}| \Upsilon_t - 
\Upsilon_n|\right].
\end{split}
\end{equation}

Suppose for a while that we have proven: there exists 
$\eta_0\in(0,\tfrac{1-x_0}{2})$ such that for all $\eta\in (0,\eta_0]$ 
and all $\epsilon\in (0,\tfrac{1}{4})$, there exist constants 
$C,\alpha>0$ such that
\begin{equation} \label{prop:supY}
\EE\sup_{t\in [n,n+1]}\left[ \mathbf{1}_{\{\tau_H\geq t\}} | 
\Upsilon_t- \Upsilon_n|\right] 
\leq C\ (H-\tfrac{1}{2})^{\frac{1}{2}-\epsilon}\ 
e^{-\frac{1}{3(2+4\epsilon)} \lambda n}
e^{-\alpha S(1-x_0-2\eta) \sqrt{2\lambda}}.
\end{equation}
We would then get:
\begin{align*}
|I_2(\lambda)| &\leq C\ \sqrt{2\lambda} \sum_{n=0}^\infty 
e^{-\frac{\lambda n}{3(2+4\epsilon)}} 
(H-\tfrac{1}{2})^{\frac{1}{4}-\epsilon} e^{-\alpha S(1-x_0-2\eta) 
\sqrt{2\lambda}} \\
&\leq C\ (H-\tfrac{1}{2})^{\frac{1}{2}-\epsilon} e^{-\alpha 
S(1-x_0-2\eta) \sqrt{2\lambda}},
\end{align*}
which is the desired result (\ref{eq:resultI2}).

In order to estimate the left-hand side of Inequality~(\ref{prop:supY})
we aim to apply Garsia-Rodemich-Rumsey's lemma (see below). However, it 
seems hard to get the desired estimate  
by estimating moments of increments of 
$\mathbf{1}_{\{\tau_H\geq t\}}|\Upsilon_t - \Upsilon_n|$,
in particular because
$\mathbf{1}_{\{\tau_H\geq t\}}$ is not smooth in the Malliavin
sense. We thus proceed by localization and construct a continuous process
$\bar{\Upsilon}_t$ which is smooth on the event $\{\tau_H\geq t\}$
and is close to~0 on the complementary event.
To this end we introduce the following new notations.

For some small $\eta>0$ to be fixed set
\begin{equation*}
\forall t\in[0,N],~
\bar{U}_t(v) = \{K_H^* - \textrm{Id}\}\left( 
\mathbf{1}_{[0,t]}(\sbullet)\ 
u_\lambda(B_\sbullet^H) \phi_\eta(B^H_\sbullet)\ e^{-\lambda \sbullet} 
\right)(v)
\end{equation*}
and
\begin{equation*}
\bar{\Upsilon}_t = \delta^{(N)}\left(\bar{U}_t\right),
\end{equation*}
where $\phi_\eta$ is a smooth function taking values in $[0,1]$ such 
that $\phi_\eta(x) = 1,\ \forall x\leq 1$, and $\phi_\eta(x) = 0,\ 
\forall x> 1+\eta$. 

The crucial property of $\bar{\Upsilon}_t$ is the following:
For all $n\in\N$ 
and $n\leq r\leq t<n+1$,
$\mathbf{1}_{\{\tau_H\geq t\}} 
\Upsilon_r = \mathbf{1}_{\{\tau_H\geq t\}} \bar{\Upsilon}_r$ a.s.
This is a consequence of the local property of $\delta$ 
(\cite[p.47]{Nualart}). 
Therefore, for any $n\leq N-1$,
\begin{equation}\label{eq:supYleqYbar}
\EE\left(\sup_{t\in [n,n+1]} \mathbf{1}_{\{\tau_H\geq t\}}|\Upsilon_t- 
\Upsilon_n|\right) = 
\EE\left(\sup_{t\in [n,n+1]} \mathbf{1}_{\{\tau_H\geq 
t\}}|\bar{\Upsilon}_t - \bar{\Upsilon}_n|\right) \leq 
\EE\left(\sup_{t\in [n,n+1]} |\bar{\Upsilon}_t-\bar{\Upsilon}_n|\right).
\end{equation}
Recall the Garsia-Rodemich-Rumsey lemma: if $X$ is a continuous process, then
for $p\geq 1$ and $q>0$ such that $pq>2$, one has
\begin{align}\label{eq:GRR}
\EE\left( \sup_{t\in[a,b]} |X_t-X_a| \right) &\leq C \frac{pq}{pq-2} 
(b-a)^{q-\frac{2}{p}}\ \EE\left[ \left( \int_a^b \int_a^b 
\frac{|X_s-X_t|^p}{|t-s|^{pq}}\ \dd s\ \dd t\right)^{\frac{1}{p}} 
\right] \nonumber\\
&\leq  C \frac{pq}{pq-2} (b-a)^{q-\frac{2}{p}}\ \left(\int_a^b \int_a^b 
\frac{\EE\left(|X_s-X_t|^p\right)}{|t-s|^{pq}}\ \dd s\ \dd 
t\right)^{\frac{1}{p}}
\end{align}
provided the right-hand side in each line is finite. In order to apply (\ref{eq:GRR}), we thus need to estimate moments of 
$\bar{\Upsilon}_t-\bar{\Upsilon}_s$. Note that Lemmas \ref{lem:boundAcal} and Lemmas \ref{lem:EstDeriv} (below) both give bounds on the moments of $\bar{\Upsilon}_t-\bar{\Upsilon}_s$ in terms of a power of $|t-s|$. Thus $\bar{\Upsilon}$ has a continuous modification, by Kolmogorov's continuity criterion, and the GRR lemma will be applicable to $\bar{\Upsilon}$.\\
We can easily obtain bounds on the norm 
$\left\|\bar{\Upsilon}_t-\bar{\Upsilon}_s\right\|_{L^2(\Omega)}$
in terms of $(H-\tfrac{1}{2})$. This observation
leads us to notice that
\begin{equation*}
\EE\left(|\bar{\Upsilon}_s-\bar{\Upsilon}_t|^{2+4\epsilon}\right) 
\leq 
\left\|\bar{\Upsilon}_t-\bar{\Upsilon}_s\right\|_{L^2(\Omega)}\times 
\EE\left(|\bar{\Upsilon}_t-\bar{\Upsilon}_s|^{2+8\epsilon}\right)
^{\frac{1}{2}}.
\end{equation*}
We then combine Lemmas~\ref{lem:boundAcal} and \ref{lem:EstDeriv} 
below to obtain: For every $[n\leq s\leq t\leq n+1]$,
\begin{align*}
\EE\left(|\bar{\Upsilon}_s-\bar{\Upsilon}_t|^{2+4\epsilon}\right)
&\leq  C\ (H-\tfrac{1}{2}) (t-s)^{\frac{1}{2}-\epsilon}\ 
e^{-\alpha S(1-x_0-2\eta) \sqrt{2\lambda}} \\
&\quad\quad\times 
(t-s)^{\frac{1}{2}+2\epsilon}\ e^{-\frac{1}{3} \lambda s} e^{-\alpha 
S(1-x_0-2\eta) \sqrt{2\lambda}} \\
&\leq C\ (H-\tfrac{1}{2})\ (t-s)^{1+\epsilon}\ 
e^{-\frac{1}{3}\lambda s}e^{-\alpha S(1-x_0-2\eta) \sqrt{2\lambda}} .
\end{align*}

Choosing $p= 2+4\epsilon$ and $q=\frac{2+\epsilon/2}{2+4\epsilon}$
we thus get
\begin{equation*}
\begin{split}
\EE\left(\sup_{t\in [n,n+1]} \mathbf{1}_{\{\tau_H\geq t\}}|\Upsilon_t- 
\Upsilon_n| \right) &\leq C\ 
(H-\tfrac{1}{2})^{\frac{1}{2+4\epsilon}}\ 
e^{-\frac{\alpha}{2+4\epsilon} S(1-x_0-2\eta) \sqrt{2\lambda}} \\
&\quad\quad\quad\left(\int_n^{n+1}\int_s^{n+1} e^{-\frac{1}{3}\lambda 
s} 
(t-s)^{\frac{\epsilon}{2}-1}\ \dd t \dd s 
\right)^{\frac{1}{2+4\epsilon}} \\
&\leq C\ (H-\tfrac{1}{2})^{\frac{1}{2+4\epsilon}}\ e^{-\alpha 
S(1-x_0-2\eta) \sqrt{2\lambda}} e^{-\frac{1}{3(2+4\epsilon)}\lambda n} ,
\end{split}
\end{equation*}
from which Inequality (\ref{prop:supY}) follows.
\end{proof}

It now remains to prove the above estimates on
$\left\|\bar{\Upsilon}_t-\bar{\Upsilon}_s\right\|_{L^2(\Omega)}$
and 
$\EE\left(|\bar{\Upsilon}_t-\bar{\Upsilon}_s|^{2+8\epsilon}\right)
^{\frac{1}{2}}$: These estimates are provided by 
Lemmas~\ref{lem:boundAcal} and \ref{lem:EstDeriv} 
below whose proofs are very technical.

\begin{lemma}\label{lem:boundAcal}
There exists $\eta_0\in(0,\tfrac{1-x_0}{2})$ such that: for all 
$0<\eta\leq \eta_0$, for all $H\in[\tfrac{1}{2},1)$ and for all 
$0<\epsilon<\tfrac{1}{4}$, there exist $C,\alpha>0$ such that
\begin{multline*}
\forall \lambda\geq 1,~\forall 0\leq n\leq s\leq t\leq n+1\leq N, \\
 \EE\left(|\bar{\Upsilon}_t
-\bar{\Upsilon}_s|^{2+8\epsilon}\right)^{\frac{1}{2}}
 \leq C\ (t-s)^{\frac{1}{2}+2\epsilon}\ e^{-\frac{1}{3} \lambda s} 
e^{-\alpha S(1-x_0-2\eta) \sqrt{2\lambda}}\ ,
\end{multline*}
where the function $S$ is defined as in Theorem \ref{prop:majgap_0}. 
\end{lemma}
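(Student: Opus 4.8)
The plan is to bound the $L^{p}(\Omega)$-norm of the Skorokhod integral $\bar\Upsilon_t-\bar\Upsilon_s=\delta^{(N)}(\bar U_t-\bar U_s)$ with $p=2+8\epsilon$, and then to use the identity $\EE(|\bar\Upsilon_t-\bar\Upsilon_s|^{2+8\epsilon})^{1/2}=\|\bar\Upsilon_t-\bar\Upsilon_s\|_{L^{p}(\Omega)}^{p/2}$: an increment bound of order $(t-s)^{1/2}$ in $L^{p}(\Omega)$ will then produce exactly the exponent $(t-s)^{p/4}=(t-s)^{1/2+2\epsilon}$, and the exponential factors get raised to the power $p/2=1+4\epsilon$, which is how the constants $\tfrac13$ and $\alpha$ in the statement arise. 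Since $\mathbf 1_{[0,t]}-\mathbf 1_{[0,s]}=\mathbf 1_{(s,t]}$ and $K_H^*-\mathrm{Id}$ is linear,
\[
\bar U_t(v)-\bar U_s(v)=\{K_H^*-\mathrm{Id}\}(\varphi_{s,t})(v),\qquad \varphi_{s,t}(\theta)=\mathbf 1_{(s,t]}(\theta)\,u_\lambda(B^H_\theta)\,\phi_\eta(B^H_\theta)\,e^{-\lambda\theta},
\]
so the integrand is the bounded operator $K_H^*-\mathrm{Id}$ applied to a function supported on the short interval $(s,t]$.

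First I would apply Meyer's inequality for the divergence, $\|\delta^{(N)}(u)\|_{L^{p}(\Omega)}\le C_p\big(\|u\|_{L^{p}(\Omega;L^2[0,N])}+\|Du\|_{L^{p}(\Omega;L^2[0,N]^2)}\big)$ with $C_p$ uniform for $p$ near $2$, reducing the estimate to the $L^{p}(\Omega)$-sizes of $\bar U_t-\bar U_s$ and of its Malliavin derivative. For the first term I would bound $\|(K_H^*-\mathrm{Id})\varphi_{s,t}\|_{L^2}\le\|K_H^*\varphi_{s,t}\|_{L^2}+\|\varphi_{s,t}\|_{L^2}$, use the isometry $\|K_H^*\psi\|_{L^2[0,N]}=\|\psi\|_{\mathcal H_H}$, and observe that for a function supported on $(s,t]$ with $t-s\le1$ and $H\ge\tfrac12$ both the $\mathcal H_H$-norm (scaling like $(t-s)^{H}$) and the $L^2$-norm (scaling like $(t-s)^{1/2}$) are controlled by $(t-s)^{1/2}$ times the size of the integrand. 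The essential move is then to push the $L^{p}(\Omega)$-norm inside the space integrals by Minkowski's integral inequality (valid since $p\ge2$) and Cauchy--Schwarz in $\Omega$, which replaces that ``size of the integrand'' by the genuine moment $\sup_{\theta\in(s,t]}\|u_\lambda(B^H_\theta)\phi_\eta(B^H_\theta)\|_{L^{p}(\Omega)}$ rather than the useless pathwise bound $\|u_\lambda\phi_\eta\|_\infty\le e^{\eta\sqrt{2\lambda}}$.

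The decay is then a Gaussian computation. Writing $u_\lambda(x)=e^{-(1-x)\sqrt{2\lambda}}$, using that $B^H_\theta$ is Gaussian with variance $\theta^{2H}$ and that $\phi_\eta$ confines $B^H_\theta\le 1+\eta$, I would bound, uniformly over $\theta\in(s,t]\subset[n,n+1]$, the quantity $\EE\big[e^{-p(1-B^H_\theta)\sqrt{2\lambda}}\phi_\eta(B^H_\theta)^{p}\big]^{1/p}e^{-\lambda\theta}$. Combining the spatial Gaussian cost of reaching the threshold from $x_0$, the exponential-moment growth $e^{c p\lambda\theta^{2H}}$, and the time factor $e^{-\lambda\theta}$, and optimising over the two regimes---small $\theta$, where $\theta^{2H}$ is negligible and the spatial cost dominates, and moderate $\theta$, where $e^{-\lambda\theta}$ dominates---yields precisely the crossover function $S(x)=x\wedge x^{1/(2H)}$ in the form $e^{-\alpha S(1-x_0-2\eta)\sqrt{2\lambda}}$, together with a fraction $e^{-\frac13\lambda s}$ of the time decay (the rest being spent to absorb the moment growth and the cutoff width $2\eta$). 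The Malliavin-derivative term is treated identically after computing $D_w\varphi_{s,t}(\theta)=\mathbf 1_{(s,t]}(\theta)(u_\lambda\phi_\eta)'(B^H_\theta)e^{-\lambda\theta}K_H(\theta,w)$: the factor $u_\lambda'=\sqrt{2\lambda}\,u_\lambda$ contributes an extra $\sqrt{2\lambda}$ and $\big(\int_0^N K_H(\theta,w)^2\,dw\big)^{1/2}=\theta^{H}\le(n+1)^{H}$ an extra polynomial factor, both of which are absorbed into the exponential decays (shrinking $\alpha$ slightly and using $\lambda\ge1$ so that $e^{-c\lambda n}(n+1)^{H}$ stays bounded).

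The main obstacle is the third step: extracting the sharp decay $e^{-\alpha S(1-x_0-2\eta)\sqrt{2\lambda}}$ \emph{uniformly in} $H\in[\tfrac12,1)$. This requires the Gaussian tail and exponential-moment estimates for $B^H_\theta$ near the threshold to be performed with constants independent of $H$ as $H\downarrow\tfrac12$, and it is the regime optimisation over $\theta$ that dictates the appearance of $S$; by contrast the operator estimate is comparatively soft here, needing only the uniform $L^2[0,N]$-boundedness of $K_H^*-\mathrm{Id}$ and the $\mathcal H_H$-isometry, with no cancellation in $H-\tfrac12$ required (that cancellation is what the companion Lemma~\ref{lem:EstDeriv} supplies, and it is absent from the present statement).
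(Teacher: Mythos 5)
The paper does not actually prove this lemma — it is stated with the remark that the proofs of Lemmas~\ref{lem:boundAcal} and \ref{lem:EstDeriv} are ``very technical'' and are deferred to \cite{RichardTalay} — so there is no in-paper argument to compare against line by line. Your strategy is nonetheless exactly the one the paper itself deploys for the analogous stopped-divergence increments in Section~\ref{sec:conj} (the $L^p$ divergence estimate of \cite[Prop.~3.2.1]{Nualart}, i.e.\ Meyer's inequality, applied to the localized integrand supported on $(s,t]$, followed by the Malliavin-derivative computation and Gaussian estimates), and your bookkeeping is consistent: the $\mathcal{H}_H$-isometry with $\alpha_H\int_s^t\int_s^t|u-v|^{2H-2}\,\dd u\,\dd v=(t-s)^{2H}$ is uniform as $H\downarrow\tfrac12$, the power $p/2=1+4\epsilon$ turns $(t-s)^{1/2}$ into $(t-s)^{1/2+2\epsilon}$, and you correctly identify that no $(H-\tfrac12)$ cancellation is required here. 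The only point worth tightening is the regime-2 Gaussian estimate, where the cutoff produces a growing factor $e^{c\eta\sqrt{2\lambda}}$ that must be absorbed by the portion of $e^{-\lambda\theta}$ not reserved for $e^{-\lambda s/3}$; this is precisely what forces $\eta\leq\eta_0$ with $\eta_0$ depending on $1-x_0$ and explains the argument $1-x_0-2\eta$ of $S$, and your sketch gestures at it without making the dependence explicit.
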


\begin{lemma}\label{lem:EstDeriv}
There exists $\eta_0\in(0,\tfrac{1-x_0}{2})$ such that: For all 
$0<\eta\leq \eta_0$ and $0<\epsilon<\tfrac{1}{4}$, there exist 
$C,\alpha>0$ such that 
\begin{multline*}
\forall n\in [0,N],~\forall H\in[\tfrac{1}{2},1),\ \forall n\leq s\leq 
t\leq n+1,\ 
\forall\lambda\geq 1,\\
\left\|\bar{\Upsilon}_t-\bar{\Upsilon}_s\right\|_{L^2(\Omega)} \leq C\ 
(H-\tfrac{1}{2})
(t-s)^{\frac{1}{2}-\epsilon}\ e^{-\alpha S(1-x_0-2\eta) 
\sqrt{2\lambda}}.
\end{multline*}
\end{lemma}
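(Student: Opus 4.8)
The plan is to reduce the estimate to the $L^2$-continuity of the Skorokhod integral and then to exploit that $K_H^*-\textrm{Id}$ is of order $H-\tfrac12$ as $H\to\tfrac12$. Write $\psi:=u_\lambda\,\phi_\eta$ and $h_r:=\psi(B^H_r)\,e^{-\lambda r}$. Since $n\le s\le t\le n+1$ and $\mathbf{1}_{[0,t]}-\mathbf{1}_{[0,s]}=\mathbf{1}_{(s,t]}$, the difference is a single Skorokhod integral,
\begin{equation*}
\bar\Upsilon_t-\bar\Upsilon_s=\delta^{(N)}\!\big((K_H^*-\textrm{Id})(\mathbf{1}_{(s,t]}\,h)\big),
\end{equation*}
of a field whose pre-image under $K_H^*$ is supported on the short interval $(s,t]$. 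I then apply the standard $L^2$ bound for the Skorokhod integral, namely, for $U=(K_H^*-\textrm{Id})(\mathbf{1}_{(s,t]}\,h)$,
\begin{equation*}
\big\|\delta^{(N)}(U)\big\|_{L^2(\Omega)}^2\leq \EE\,\|U\|_{L^2([0,N])}^2+\EE\,\|DU\|_{L^2([0,N]^2)}^2 ,
\end{equation*}
so it suffices to bound the $L^2([0,N])$-norm of $U$ and, after one Brownian Malliavin differentiation, of $DU$, each carrying the three required factors $(H-\tfrac12)$, $(t-s)^{\frac12-\epsilon}$ and the exponential decay.

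The factors $(H-\tfrac12)$ and $(t-s)^{\frac12-\epsilon}$ originate in the operator $K_H^*-\textrm{Id}$. The key analytic input is an estimate of the form: for $f$ supported on $(s,t]$ with a controlled H\"older seminorm $\|f\|_\alpha$,
\begin{equation*}
\|(K_H^*-\textrm{Id})f\|_{L^2([0,N])}\leq C\,(H-\tfrac12)\,(t-s)^{\frac12-\epsilon}\,\|f\|_{\alpha},
\end{equation*}
together with a companion bound once $f$ is replaced by $\mathbf{1}_{(s,t]}\psi'(B^H_\sbullet)e^{-\lambda\sbullet}K_H(\sbullet,w)$ for the derivative term, where $D_wh_r=\psi'(B^H_r)\,e^{-\lambda r}\,K_H(r,w)$. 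Here one must show that the explicit prefactor $(H-\tfrac12)c_H$ in $K_H^*$, combined with the near-diagonal singular kernel $(\theta-s)^{H-3/2}$, yields a genuine $O(H-\tfrac12)$ contribution rather than the $O(1)$ term: the singular part of $K_H^*$ is to leading order an approximation of the identity, and subtracting $\textrm{Id}$ cancels the $O(1)$ part, leaving the linear rate. The small loss $\epsilon$ absorbs the H\"older exponent needed to tame the singularity, while the restriction $\mathbf{1}_{(s,t]}$ produces the power $(t-s)^{\frac12-\epsilon}$; the harmless factor $e^{-\lambda r}\le 1$ is simply dropped (the $e^{-\lambda n}$ decay is not tracked here, as it is supplied by Lemma~\ref{lem:boundAcal}).

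The exponential decay $e^{-\alpha S(1-x_0-2\eta)\sqrt{2\lambda}}$ is extracted from the $\omega$-moments of $h$ and $Dh$. Since $u_\lambda(x)=e^{-\sqrt{2\lambda}(1-x)}$ and $\phi_\eta$ is supported on $\{x\le 1+\eta\}$, the integrand $h_r$ is non-negligible only when $B^H_r$ lies within $\eta$ of the threshold, so $\EE[\psi(B^H_r)^2]\le \int_{-\infty}^{1+\eta} e^{-2\sqrt{2\lambda}(1-x)}\,p^H_r(x)\,\dd x$. Estimating this Gaussian integral with a linear-exponential weight by a Laplace/saddle-point argument, uniformly over the relevant range of $r$, produces the factor $e^{-\alpha S(1-x_0-2\eta)\sqrt{2\lambda}}$, the function $S(x)=x\wedge x^{1/2H}$ reflecting the small-time ($r^{2H}$) versus large-time scaling of the increments. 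Uniformity in $H\in[\tfrac12,1)$ and $\lambda\ge1$ follows from the Gaussian density bound with $\|\sigma\|_\infty=1$ coming from the Lamperti reduction.

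The main obstacle is the quantitative control of $K_H^*-\textrm{Id}$: one must prove that the cancellation between the singular operator and the identity really delivers the linear rate $(H-\tfrac12)$ while simultaneously keeping the short-interval power $(t-s)^{\frac12-\epsilon}$ and not spoiling the exponential decay, this last point being delicate because $K_H^*$ is nonlocal and spreads the support of $\mathbf{1}_{(s,t]}h$ back to $[0,t]$. The derivative term in the Skorokhod bound is equally involved, since the extra fBm kernels $K_H(r,w)$ arising from $D_wh_r$ must be handled jointly with $K_H^*-\textrm{Id}$ and estimated uniformly in $H$; this is precisely the source of the technicality announced before the statement.
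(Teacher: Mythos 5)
First, note that the paper itself does not prove this lemma: it explicitly defers the proofs of Lemmas~\ref{lem:boundAcal} and \ref{lem:EstDeriv} to \cite{RichardTalay}, describing them as ``very technical''. So your proposal can only be judged on its own terms. Your overall skeleton is the natural one (write $\bar\Upsilon_t-\bar\Upsilon_s$ as a single Skorokhod integral of $(K_H^*-\mathrm{Id})(\mathbf{1}_{(s,t]}h)$, apply the $L^2$ bound $\EE[\delta(U)^2]\le \EE\|U\|^2_{L^2}+\EE\|DU\|^2_{L^2}$, and extract the factor $H-\tfrac12$ from the operator $K_H^*-\mathrm{Id}$), and the heuristic that $K_H^*$ is an $O(H-\tfrac12)$ perturbation of the identity is the right one. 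But there are two genuine gaps.

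The more serious one is your decision to ``simply drop'' the factor $e^{-\lambda r}\le 1$. That factor is not harmless: it is indispensable for the exponential decay in $\sqrt{2\lambda}$, not merely for the $e^{-\lambda n}$ decay supplied by Lemma~\ref{lem:boundAcal}. Indeed, for $r$ of order $1$ a direct computation of your Gaussian integral gives
\begin{equation*}
\int_{-\infty}^{1} e^{-2\sqrt{2\lambda}\,(1-x)}\,\frac{e^{-(x-x_0)^2/(2r^{2H})}}{\sqrt{2\pi r^{2H}}}\,\dd x \;\sim\; \frac{C_r}{\sqrt{\lambda}},
\end{equation*}
i.e.\ only polynomial decay in $\lambda$ (the integrand concentrates in a window of width $\lambda^{-1/2}$ below the threshold, where $u_\lambda$ is of order one), and the portion $x\in(1,1+\eta]$ is even worse since there the extension of $u_\lambda$ is as large as $e^{\sqrt{2\lambda}\eta}$. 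No Laplace/saddle-point argument on $\EE[\psi(B^H_r)^2]$ alone can produce $e^{-\alpha S(1-x_0-2\eta)\sqrt{2\lambda}}$. The decay comes precisely from keeping $e^{-2\lambda r}$ and balancing it against the Gaussian tail $e^{-(1-x_0-2\eta)^2/(2r^{2H})}$ over $r$: minimizing $2\lambda r + c^2/(2r^{2H})$ in $r$ is what generates the function $S(x)=x\wedge x^{1/2H}$ (small $r$ versus $r\ge 1$) and, in the diffusion case, the shift $\sqrt{2\lambda+\mu^2}-\mu$. Compare the treatment of $I_1(\lambda)$ in the paper, where $e^{-\lambda s}$ is carefully retained inside the time integral for exactly this reason.

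The second gap is that your ``key analytic input'' $\|(K_H^*-\mathrm{Id})f\|_{L^2}\le C(H-\tfrac12)(t-s)^{\frac12-\epsilon}\|f\|_\alpha$ is asserted rather than proved, and as stated it cannot be applied to $f=\mathbf{1}_{(s,t]}h$: that function is discontinuous at $s$ and $t$, so $\|f\|_\alpha=\infty$. One must instead decompose, e.g., $\mathbf{1}_{(s,t]}h=h_t\,\mathbf{1}_{(s,t]}+\mathbf{1}_{(s,t]}(h-h_t)$, compute $(K_H^*-\mathrm{Id})\mathbf{1}_{(s,t]}$ explicitly (where the cancellation $(t-v)^{H-1/2}-1=O\big((H-\tfrac12)|\log(t-v)|\big)$ appears), and control the H\"older part using the a.s.\ bounds~(\ref{ineq:DXH}) on $\|B^H\|_\alpha$ together with the $\lambda$-dependent Lipschitz constant $\sqrt{2\lambda}$ of $u_\lambda$ --- all while tracking the three factors simultaneously. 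This is where the real technical work of \cite{RichardTalay} lies, and your sketch does not yet engage with it.
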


\section{Discussion on the fBm case with $\lambda< 
1$}\label{sec:conj}

We believe that Theorem 
\ref{prop:majgap_0} also holds true for $\lambda\in(0,1]$. One of 
the main issues consists in getting accurate enough bounds on the 
right-hand side of Inequality~(\ref{eq:1stBoundI2}).

For $a_\lambda = \lambda^{-\frac{1}{2H}}$ and $b_\lambda=\tfrac{-\log 
\sqrt{\lambda}}{\lambda}$ ($\lambda<1$) we have
\begin{align*}
|I_2(\lambda)|\leq &\sqrt{2\lambda} \EE\left[\sup_{t\in [0,a_\lambda]} 
\mathbf{1}_{\{\tau_H\geq t\}} 
\left|\delta\left(\{K_H^*-\text{Id}\}(\mathbf{1}_{[0,t]} u_\lambda(B_\sbullet^H) 
e^{-\lambda\sbullet})\right)\right|\right]\\ 
&+\sqrt{2\lambda} \EE\left[\sup_{t\in [a_\lambda,b_\lambda]} 
\mathbf{1}_{\{\tau_H\geq t\}} 
\left|\delta\left(\{K_H^*-\text{Id}\}(\mathbf{1}_{[a_\lambda,t]} 
u_\lambda(B_\sbullet^H) e^{-\lambda\sbullet})\right)\right|\right] \\
&+ \sqrt{2\lambda} \lim_{N\rightarrow +\infty} \EE\left[\sup_{t\in 
[b_\lambda,N]} \mathbf{1}_{\{\tau_H\geq t\}} 
\left|\delta\left(\{K_H^*-\text{Id}\}(\mathbf{1}_{[b_\lambda,t]} 
u_\lambda(B_\sbullet^H) e^{-\lambda\sbullet})\right)\right|\right] .
\end{align*}
We here limit ourselves to examine the second summand on the r.h.s and 
we denote it by $I_2^{(2)}(\lambda)$. The two other terms (corresponding 
to $t< a_\lambda$ 
and $t>b_\lambda$) are easier to study.

Compared to Subsection \ref{subsec:proof_Thm} we localize the Skorokhod 
integral in a slightly different manner by using $\phi_\eta(S^H_t)$ 
instead of $\phi_\eta(B^H_t)$, where $S^H_t$ denotes the running 
supremum of the fBm up to time $t$. Hence
\begin{multline*}
\mathbf{1}_{\{\tau_H\geq t\}} 
\delta\left(\{K_H^*-\text{Id}\}\left(\mathbf{1}_{[0,t]} 
u_\lambda(B^H_\sbullet) e^{-\lambda \sbullet}\right)\right)\\
= \mathbf{1}_{\{\tau_H\geq t\}} 
\delta\left(\{K_H^*-\text{Id}\}\left(\mathbf{1}_{[0,t]} 
u_\lambda(B^H_\sbullet) \phi_\eta(S^H_\sbullet) e^{-\lambda \sbullet}\right)\right) \ 
\text{a.s.}
\end{multline*}
Set $\bar{V}_\lambda(s) := u_\lambda(B^H_s) \phi_\eta(S^H_s)$ and 
\begin{equation*}
\tilde{\Upsilon}_t := 
\delta\left(\{K_H^*-\text{Id}\}\left(\mathbf{1}_{[0,t]} 
\bar{V}_\lambda(\sbullet) e^{-\lambda \sbullet}\right)\right) .
\end{equation*}
Proceeding as from Eq.(\ref{eq:supYleqYbar}) to Eq.(\ref{eq:GRR}) we get
for some $p>1$ and $m>0$ (chosen 
later): 
\begin{align}\label{eq:I}
\EE \left(\sup_{t\in [a_\lambda,b_\lambda]} \mathbf{1}_{\{\tau_H\geq 
t\}} |\delta_H\left(\mathbf{1}_{[0,t]} u_\lambda(B_\sbullet^H) e^{-\lambda 
\sbullet}\right)|\right) &\leq \PP\left(\tau_H\geq 
a_\lambda\right)^{\frac{p-1}{p}} C (b_\lambda-a_\lambda)^{\frac{m}{p}} \nonumber\\
&\  \times 
\left(\int_{a_\lambda}^{b_\lambda}\int_{a_\lambda}^{b_\lambda} 
\frac{\EE\left(|\tilde{\Upsilon}_t-\tilde{\Upsilon}_s|^p\right)}{|t-s|^{m+2}}
 \ \dd s\dd t\right)^{\frac{1}{p}}\ .
\end{align}

We then use the proposition 3.2.1 in \cite{Nualart} to bound 
$\EE|\tilde{\Upsilon}_t-\tilde{\Upsilon}_s|^{p}$:
\begin{equation}\label{eq:moments}
\begin{split}
\EE|\tilde{\Upsilon}_t-\tilde{\Upsilon}_s|^p &\leq C 
(t-s)^{\frac{p}{2}-1} \int_s^t \ |\EE\left(\bar{V}_\lambda(r) e^{-\lambda 
r}\right)|^p \\
&\quad\quad + \EE\left[\left(\int_0^{b_\lambda} |D_\theta 
\bar{V}_\lambda(r) e^{-\lambda r}|^2\ \dd 
\theta\right)^{\frac{p}{2}}\right]\dd r .
\end{split}
\end{equation}
The Malliavin derivative of the supremum of the fBm is obtained for 
example in \cite{DecreusefondNualart08}. Denoting by $\vartheta_r$ 
the first time at which $B^H$ reaches $S^H_r$ on the interval $[0,r]$ 
we have $D_\theta^H S^H_r = \mathbf{1}_{\{\vartheta_r>\theta\}}$. It 
follows that $D_\theta S^H_r = K_H(\vartheta_r,\theta)$. Since 
$D_\theta \bar{V}_\lambda(r) = \phi_\eta(S^H_r) D_\theta 
u_\lambda(B^H_r) + u_\lambda(B^H_r) D_\theta \phi_\eta(S_r^H)$, we are 
led to study the three following terms (for $p>2$):
\begin{enumerate}
\item[(i)] $\EE\left(\bar{V}_\lambda(r) e^{-\lambda r}\right) \leq 
\EE\left(\phi_\eta(S^H_r)\right) \leq \PP(S^H_r\leq 1+\eta)$. 
\item[(ii)] 
$ e^{- p \lambda r}\EE\left[\left(\int_0^{b_\lambda} 
|\phi_\eta(S^H_r) D_\theta u_\lambda(B^H_r)|^2 \ \dd 
\theta\right)^{\frac{p}{2}} \right] $
\newline
$\leq \EE\left[\mathbf{1}_{\{S^H_r 
\leq 1+\eta\}} \left(\int_0^r |\sqrt{2\lambda} K_H(r,\theta) 
u_\lambda(B^H_r)|^2 \ \dd \theta\right)^{\frac{p}{2}} \right]$
\newline
$= (\sqrt{2\lambda})^{p}\ r^{pH}\ \EE(\mathbf{1}_{\{S^H_r 
\leq 1+\eta\}} u_\lambda(B^H_r)^p).$
\item[(iii)] $e^{-p \lambda r}\EE\left[\left(\int_0^{b_\lambda} 
|u_\lambda(B^H_r) D_\theta \phi_\eta(S^H_r)|^2 \ \dd 
\theta\right)^{\frac{p}{2}} \right] $
\newline
$\leq \EE\left[\phi_\eta'(S^H_r)^{p}\ \vartheta_r^{Hp} \right] 
\leq \|\phi_\eta'\|_\infty^p 
\EE\left[\mathbf{1}_{\{S_r^H\leq 
1+\eta\}} \vartheta_r^{Hp}\right].$
\end{enumerate}
We do not know any accurate estimate on the joint law of either 
$(S^H_\sbullet,B^H_\sbullet)$ or $(S^H_\sbullet,\vartheta_\sbullet)$. We thus can 
only use the rough bounds $\mathbf{1}_{\{S_r^H\leq 1+\eta\}} 
u_\lambda(B^H_r) \leq C \mathbf{1}_{\{S_r^H\leq 1+\eta\}}$ for (ii) and 
$\vartheta_r\leq r$ for (iii). Then one is in a position to use the 
following 
refinement of Molchan's asymptotic \cite{Molchan} obtained 
by Aurzada~\cite{Aurzada}: $\PP(\tau_H \geq t) \leq t^{-(1-H)} (\log t)^c$ for some constant $c>0$. 
However, when plugged into (\ref{eq:moments}) and then into~ 
(\ref{eq:I}), these bounds lead us to an upper bound for 
$|I_2^{(2)}(\lambda)|$ which diverges when $\lambda\rightarrow 0$.

Hence the preceding rough bounds on (ii) and (iii) must be improved.
In the Brownian motion case, the joint laws of $(B_r, 
S^{\frac{1}{2}}_r)$ and $(\vartheta_r,S^{\frac{1}{2}}_r)$ are known 
(see e.g. \cite[p.96--102]{KaratzasShreve}). In particular, 
for $p\in(2,3)$ the term~(iii) leads to
\begin{equation}\label{eq:boundVartheta}
\forall r\geq 0,\quad \EE\left[\mathbf{1}_{\{S^{1/2}_r\leq 1+\eta\}} 
\vartheta_r^{\frac{p}{2}}\right] \leq C
\end{equation}
instead of the bound $r^{\frac{p}{2}-\frac{1}{2}} (\log t)^c$ when one 
uses the previous rough method. 

From numerical simulations and an incomplete mathematical analysis 
using arguments developed by~\cite{Molchan} and~\cite{Aurzada} we 
believe that 
Inequality (\ref{eq:boundVartheta}) remains true for $H>\tfrac{1}{2}$. 
If so, the bound on $|I_2^{(2)}(\lambda)|$ would become
\begin{equation*}
|I_2^{(2)}(\lambda)| \leq C\sqrt{2\lambda} 
a_\lambda^{-(1-H)\frac{p-1}{p}} \ (b_\lambda-a_\lambda)^{\frac{1}{2}} ,
\end{equation*}
which, in view of $a_\lambda = \lambda^{-\frac{1}{2H}}$ and $b_\lambda 
= \frac{-\log \sqrt{\lambda}}{\lambda}$, can now be bounded as 
$\lambda\rightarrow 0$.

\section{Optimal rate of convergence in Theorem~\ref{prop:majgap_0}: 
Comparison with numerical results}\label{sec:numerics}

In this section, we numerically approximate the quantity 
$\matcal{L}(H,\lambda) = \EE\left[e^{-\lambda \tau_H}\right]$, where 
$\tau_H$ is the first time a fractional Brownian motion started from 
$0$ hits $1$.\\
As already recalled this Laplace transform  is explictely 
known in the Brownian case: $\matcal{L}(\tfrac{1}{2},\lambda) = 
e^{-\sqrt{2\lambda}}$, 
$\forall \lambda\geq 0$. Our simulations suggest that the convergence 
of $\matcal{L}(H,\lambda)$ towards $\matcal{L}(\tfrac{1}{2},\lambda)$ 
is faster than what we were able to prove. We also show numerical 
experiments which concern the convergence of hitting time densities.

Although several numerical schemes permit to decrease the weak 
error when estimating $\tau_{\frac{1}{2}}$, none seem to be available 
in the fractional Brownian motion case. We thus propose a heuristic 
extension of the 
bridge correction of Gobet \cite{Gobet} (valid in the Markov case) and 
compare this procedure to the standard Euler scheme.

\ 

\noindent \textbf{Convergence of $\EE\left[e^{-\lambda \tau_H}\right]$ 
to $\EE\big[e^{-\lambda \tau_{\frac{1}{2}}}\big]$.} 

Let us fix a time horizon $T$ and $N$ points on each trajectory. 
Let $\delta = \tfrac{T}{N}$ be the time step. 
Denote by $M$ the number of Monte-Carlo samples. 
For each $m\in\{1,\dots,M\}$, we simulate 
$\{B^{H,N}_{n\delta}(m)\}_{1\leq n\leq N}$, from which we obtain 
$\tau^{\delta,T}_H(m) = \inf\{n\delta: B^{H,N}_{n\delta}(m)>1\}$. We 
then approximate $\matcal{L}(H,\lambda)$ as follows:
\begin{equation*}
\matcal{L}(H,\lambda) \approx \frac{1}{M} \sum_{m=1}^M e^{-\lambda 
\tau^{\delta,T}_H(m)} =: \matcal{L}^{\delta,T,M}(H,\lambda)\ .
\end{equation*}
The bias $\tau^{\delta,T}_H(m) \geq \tau_H(m)$ due to the time 
discretization implies $\lim_{M\rightarrow \infty} 
\matcal{L}^{\delta,T,M}(H,\lambda)\leq \matcal{L}(H,\lambda)$.

In view of Theorem \ref{prop:majgap_0} we have
\begin{align*}
\log\left|\matcal{L}(H,\lambda) - 
\matcal{L}(\tfrac{1}{2},\lambda)\right| \leq C_\lambda + \beta\ 
\log(H-\frac{1}{2})\ ,
\end{align*}
with $\beta = (\frac{1}{4}-\epsilon)$.
We approximate $\log\left|\matcal{L}(H,\lambda) - 
\matcal{L}(\tfrac{1}{2},\lambda)\right|$ by 
$\log\left|\matcal{L}^{\delta,T,M}(H,\lambda) - 
\matcal{L}(\tfrac{1}{2},\lambda)\right|$  for several values of $H$ 
close to $\tfrac{1}{2}$ and then perform a linear regression analysis 
around $\log(H-\tfrac{1}{2})$. The slope of the regression line 
provides a hint on the optimal value of $\beta$.

Notice that the global error $|\matcal{L}(H,1) - 
\matcal{L}^{\delta,T,M}(H,1)|$ results from the discretization error
$\text{error}(\delta)$ and the statistical error~$\text{error}(M)$.
The chosen number of simulations $M=10^5$ is such that 
$|\text{error}(M)|\leq C/\sqrt{M} \approx 7.10^{-4}$, for some numerical constant $C>0$.

The numerical results are presented in Table \ref{tab:3} for several 
values of $\lambda (=1,2,3,4)$ and of the parameter $H\in\{0,5; 0,51; 
0,52; 0,54; 0,6\}$. 
These results suggest that 
$|\matcal{L}^{\delta,T,M}(\tfrac{1}{2},\lambda) - 
\matcal{L}^{\delta,T,M}(H,\lambda)|$ is linear w.r.t.
$(H-\tfrac{1}{2})$. For each $\lambda$ we thus perform a linear 
regression on these quantities (without the above $\log$ 
transformation). The regression line is plotted in Fig.~\ref{fig:Reg}.

\begin{figure}[!h]
\centering
\includegraphics[scale=.5]{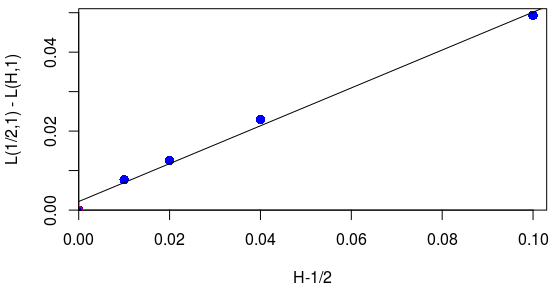}
\caption{Regression of $\matcal{L}(\tfrac{1}{2},1) - \matcal{L}(H,1)$ against $H-\tfrac{1}{2}$ using the values from Table \ref{tab:3}.}
\label{fig:Reg}
\end{figure}

Our numerical results suggest that Theorem~\ref{prop:majgap_0} is not 
optimal but the optimal convergence rate seems hard to get. An even 
more difficult result to obtain concerns the convergence 
rate of the density of the first 
hitting time of fBm to the density of the first hitting time of 
Brownian motion. We analyze it numerically: See 
Fig.~\ref{fig:Densities}.

\begin{figure}[!h]
\centering
\includegraphics[scale=.55]{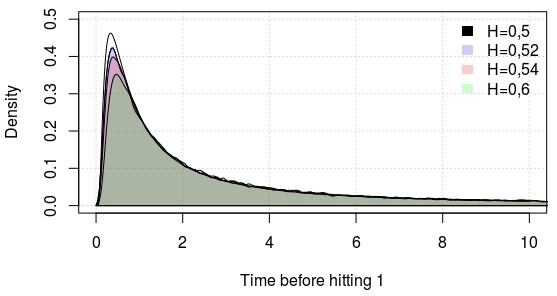}
\caption{Density of $\tau_H$ for several values of $H$}
\label{fig:Densities}
\end{figure}

\ 

\noindent \textbf{Brownian bridge correction.} We apply the following 
rule (which is only heuristic when $H>\tfrac{1}{2}$):
at each time step, if the threshold has not yet been hit and if 
$B^{H,N}_{(n-1)\delta}(m)<1$ and $B^{H,N}_{n\delta}(m)<1$, we sample a 
uniform random variable $U$ on $[0,1]$ and compare it to 
\begin{equation*}
p_H = 
\exp\left\{-2\frac{\left(1-B^{H,N}_{(n-1)\delta}(m)\right)
\left(1-B^{H,N}_{n\delta}(m)\right)}{\delta^{2H}}\right\}. 
\end{equation*}
If $U<p_H$ then decide $\tau^{\delta,T}_H(m) = n\delta$. 
Otherwise let the algorithm continue. We denote by 
$\widetilde{\matcal{L}}^{\delta,T,M}(H,\lambda)$ the corresponding 
Laplace transform. This algorithm is an adaptation to a non-Markovian framework of the algorithm of \cite{Gobet}, which is rigorously proven when $H=\tfrac{1}{2}$. In particular $p_{\frac{1}{2}}$ is exactly the probability that a Brownian motion conditioned by its values at time $(n-1)\delta$ and $n\delta$ crosses $1$ in the time interval $[(n-1)\delta,n\delta]$.

Table \ref{tab:1} shows the corresponding 
results for the simple estimator 
$\matcal{L}^{\delta_0,T,M}(\tfrac{1}{2},\lambda)$ and the Brownian 
Bridge
estimator $\widetilde{\matcal{L}}^{\delta_1,T,M}(\tfrac{1}{2},\lambda)$ 
with $\delta_0<\delta_1$ in the Brownian case (we kept $M=10^5$). 
Consistently with theoretical 
results, Table 
\ref{tab:1} shows that the estimator 
$\widetilde{\matcal{L}}^{\delta,T,M}(H,\lambda)$ allows to 
substantially reduce the number of discretization steps (thus the
computational time) to get a desired accuracy. The figure also shows 
a reasonable choice of $\delta_1$ which we actually keep 
when tackling the fractional Brownian motion case.

The exact value $\matcal{L}(H,\lambda)$ is unknown. Our reference value 
is the lower bound
$\matcal{L}^{\delta_0,T,M}(H,\lambda)$.
The parameter $\delta_1$ used in Table \ref{tab:2_0} allows to 
conjecture that the Brownian bridge correction is useful even in the 
non-Markovian case. 
Although the approximation errors of the estimators 
$\matcal{L}^{\delta_1,T,M}$ and 
$\widetilde{\matcal{L}}^{\delta_1,T,M}$ are similar when compared
to $\matcal{L}^{\delta_0,T,M}(H,\lambda)$, we recommend to use the 
latter because we have
$\matcal{L}^{\delta_1,T,M}(H,\lambda)\leq 
\matcal{L}^{\delta_0,T,M}(H,\lambda)\leq \matcal{L}(H,\lambda)$ whereas 
$\matcal{L}^{\delta_0,T,M}(H,\lambda)\leq 
\widetilde{\matcal{L}}^{\delta_1,T,M}(H,\lambda)$.


\section*{Appendix: tables}

\begin{table}[!h]
\small
\begin{center}
Set of parameters: $T=20,\ N=2^{16}\ (\delta\approx 3.10^{-4}),\ M=10^5$
\end{center}
\vspace{-0.8cm}
\begin{center}
\begin{tabular}{p{0.8cm}|p{1.5cm}p{1cm}|p{1.5cm}p{1cm}|p{1.5cm}p{1cm}|p{1.5cm}p{1cm}}
\noalign{\smallskip}
    & $\lambda=1$ &   & $\lambda=2$ &   & $\lambda=3$ &   & $\lambda=4$ 
    &  \\
\hline
$H$ & $\matcal{L}^{\delta,T,M}(H,\lambda)$ & \hspace{0.4cm}$\Delta_H$ & 
$\matcal{L}^{\delta,T,M}(H,\lambda)$ & \hspace{0.4cm}$\Delta_H$ & 
$\matcal{L}^{\delta,T,M}(H,\lambda)$ & \hspace{0.4cm}$\Delta_H$ & 
$\matcal{L}^{\delta,T,M}(H,\lambda)$ & \hspace{0.4cm}$\Delta_H$ \\
\hline\noalign{\smallskip}\noalign{\smallskip}
$0,50$ & $0,2400$  & \hspace{0.4cm}-- & $0,1329$  & \hspace{0.4cm}-- & 
$0,0846$  & \hspace{0.4cm}-- & $0,0578$  & \hspace{0.4cm}-- \\
\hline\noalign{\smallskip}
$0,51$ & $0,2323$  & $0,0077$ & $0,1271$  & $0,0059$ & $0,0800$  & 
$0,0046$ & $0,0542$  & $0,0037$ \\
\hline\noalign{\smallskip}
$0,52$ & $0,2275$  & $0,0125$ & $0,1232$  & $0,0098$ & $0,0769$  & 
$0,0077$ & $0,0517$  & $0,0061$ \\
\hline\noalign{\smallskip}
$0,54$ & $0,2171$  & $0,0229$ & $0,1149$  & $0,0180$ & $0,0703$  & 
$0,0143$ & $0,0464$  & $0,0114$ \\
\hline\noalign{\smallskip}
$0,60$ & $0,1907$  & $0,0493$ & $0,0958$  & $0,0372$ & $0,0560$  & 
$0,0286$ & $0,0354$  & $0,0224$ \\
\noalign{\smallskip}\noalign{\smallskip}
\end{tabular}
\end{center}
\vspace{-0.3cm}
\caption{Values of $\Delta_H = \EE\big[e^{-\lambda 
\tau_{\frac{1}{2}}}\big] - \EE\left[e^{-\lambda \tau_H}\right]$ when 
$H\rightarrow\tfrac{1}{2}$.}
\label{tab:3}
\end{table}

\begin{table}[!ht]
\small 
\begin{center}
Set of parameters: $T=20,\ N=2^{16}\ (\delta_0\approx 3.10^{-4}),\ 
M=10^5$ for the simple estimator\\
\hspace{3cm} $T=20,\ N=2^{15}\ (\delta_1\approx 6.10^{-4}),\ M=10^5$ 
for the Bridge estimator
\end{center}
\vspace{-0.8cm}
\begin{center}
\begin{tabular}{p{1.5cm}p{2cm}p{2cm}p{1.5cm}p{2cm}p{1.5cm}}
\hline\noalign{\smallskip}
$\lambda$ & $\matcal{L}(\tfrac{1}{2},\lambda)$ & 
$\matcal{L}^{\delta,T,M}(\tfrac{1}{2},\lambda)$ & Error (\%) & 
$\widetilde{\matcal{L}}^{\delta,T,M}(\tfrac{1}{2},\lambda)$ & Error 
(\%) \\
\hline\noalign{\smallskip}\noalign{\smallskip}
$1$ & $0,2431$  & $0,2400$ & $1,3$ & $0,2438$ & $0,3$\\
$2$ & $0,1353$  & $0,1329$ & $1,7$ & $0,1358$ & $0,4$\\
$3$ & $0,0863$ & $0,0846$ & $2,0$ & $0,0867$ & $0,5$\\
$4$ & $0,0591$  & $0,0578$ & $2,2$ & $0,0594$ & $0,5$\\
\noalign{\smallskip}\hline\noalign{\smallskip}
\end{tabular}
\end{center}
\vspace{-0.3cm}
\caption{Test case: Error estimation of our procedure in the Brownian 
case ($H=\tfrac{1}{2}$)}
\label{tab:1}   
\end{table}

\begin{table}[!ht]   
\small
\begin{center}
Set of parameters: $T=20,\ N=2^{16}\ (\delta_0\approx 1,5.10^{-4}),\ 
M=10^5$ for the simple estimator\\
\hspace{2.6cm} $T=20,\ N=2^{15}\ (\delta_1\approx 6.10^{-4}),\ M=10^5$ 
for the simple estimator\\
\hspace{2.6cm} $T=20,\ N=2^{15}\ (\delta_1\approx 6.10^{-4}),\ M=10^5$ 
for the Bridge estimator
\end{center}
\vspace{-0.8cm}
\begin{center}
\begin{tabular}{p{1.8cm}p{2cm}p{2cm}p{1.5cm}p{2cm}p{1.5cm}}
\hline\noalign{\smallskip}
$\lambda$ & $\matcal{L}^{\delta_0,T,M}(H,\lambda)$ & 
$\matcal{L}^{\delta_1,T,M}(H,\lambda)$ & Error (\%) & 
$\widetilde{\matcal{L}}^{\delta_1,T,M}(H,\lambda)$ & Error (\%) \\
\hline\noalign{\smallskip}\noalign{\smallskip}
$1$ & $0,2171$  & $0,2147$ & $1,1$ & $0,2186$ & $0,7$\\
$2$ & $0,1149$ & $0,1131$ & $1,6$ & $0,1165$ & $1,4$\\
$3$ & $0,07003$ & $0,0689$ & $2,0$ & $0,0717$ & $1,9$\\
$4$ & $0,0464$  & $0,0453$ & $2,3$ & $0,0476$ & $2,5$\\
\noalign{\smallskip}\hline\noalign{\smallskip}
\end{tabular}
\end{center}
\vspace{-0.3cm}
\caption{Comparison of estimators in the fractional case ($H=0,54$)}
\label{tab:2_0} 
\end{table}

\clearpage

\end{document}